\newcommand{\comment}[1]{}
\newtheorem{theorem}{Theorem}[section]
\newtheorem{definition}[theorem]{Definition}
\newtheorem{lemma}[theorem]{Lemma}
\newtheorem{proposition}[theorem]{Proposition}
\newtheorem{remark}[theorem]{Remark}
\newtheorem{example}[theorem]{Example}
\newcommand{\vanish}[1]{}\parskip=12pt
\newcommand{\sign}{\operatorname{sign}}
\newcommand{\signc}{\varepsilon}
\newcommand{\signt}{\operatorname{sign}_t}
\def\p{\prime}
\def\b{\textbf{b}}
\def\bb{\bar{b}}
\def\tb{\tilde{b}}
\def\H{\mathcal{H}}
\def\M{\mathcal{M}}
\numberwithin{equation}{section}
\begin{document}
\title{Invariants of rational links represented by reduced alternating diagrams}
\author{Yuanan Diao$^\dagger$, Claus Ernst$^*$ and Gabor Hetyei$^\dagger$}
\address{$^\dagger$ Department of Mathematics and Statistics\\
University of North Carolina Charlotte\\
Charlotte, NC 28223}
\address{$^*$ Department of Mathematics\\
Western Kentucky University\\
Bowling Green, KY 42101, USA}
\email{}
\subjclass[2010]{Primary: 57M25; Secondary: 57M27}
\keywords{continued fractions, knots, links, braid index, alternating, Seifert graph.}

\begin{abstract}
A rational link may be represented by any of the (infinitely) many link diagrams corresponding to various
continued fraction expansions of the same rational number. The continued fraction expansion of the rational number in which all signs are the same is called a {\em nonalternating form} and the diagram
corresponding to it is a reduced alternating link diagram, which is minimum in terms of the number of crossings in the diagram. Famous formulas exist in the literature for the braid
index of a rational link by Murasugi and for its HOMFLY polynomial by
Lickorish and Millet, but these rely on a special continued fraction
expansion of the rational number in which all partial denominators are even (called {\em all-even form}). In 
this paper we present an algorithmic way to transform a continued
fraction given in nonalternating form into the all-even form. Using this
method we derive formulas for the braid index and the HOMFLY polynomial
of a rational link in terms of its reduced alternating form, or equivalently the nonalternating form of the corresponding rational number.    
\end{abstract}

\maketitle
\section{Introduction}

How to compute the braid index of rational links was first shown by Murasugi~\cite{Mu} more than 25 years ago.  This computation became possible because of the discovery of the HOMFLY polynomial $H(a,z)$~\cite{Fr,Pr}. Using this polynomial one could derive the Morton-William-Frank inequality~\cite{FW, Mo}: 
$$\textbf{b}(K)\ge (E(K)-e(K))/2+1,$$ 
where $K$ is a knot or link, $\b(K)$ is the braid index of $K$, $E(K)$ is the maximal $a$-power of $H_K(a,z)$, $e(K)$ is the minimal $a$-power of $H_K(a,z)$, and $H_K(a,z)$ is the HOMFLY polynomial of the knot or link $K$. In addition, we have an inequality given by Yamada~\cite{Ya}
$$\b(K)\le s(D_K),$$
where $D_K$ is any regular diagram of $K$ and $s(D_K)$ is the number of Seifert circles in $D_K$.
In the case that one can find a diagram $D$ of a knot or link $K$ such that
$s(D)= (E(D)-e(D))/2+1$ holds, it then follows that we must have $s(D)=\textbf{b}(D)$. 

When Murasugi~\cite{Mu} first established the braid index of rational
links, he did so by using a special diagram $D_K$ of the link $K$ that
relies on a particular continued fraction expansion using only even
integers which is usually highly non minimal. The same special expansion
was used by Lickorish and Millet~\cite[Proposition 14]{Li-Mi} to present
a formula for the HOMFLY polynomial of a rational link. A possible
reason for representing rational links with such a special diagram is noted by
Duzhin and Shkolnikov~\cite{Du} who observed that ``due to the fact that
all blocks are of even length, strands are everywhere
counter-directed.''  

Unfortunately, this special all-even expansion is not only highly non-minimal,
but it also does not exist if both integers in the fraction defining the
rational link are odd, in which case one could not apply this method
directly and  needs to use a diagram of the mirror image of $K$ (which
corresponds to a different fraction in which one of the integers is
even). Missing are formulas that are stated in terms of minimal (alternating) diagrams 
or equivalently that can use any fraction of a rational link (even those where both integers are odd).

A first effort in this direction was made by the authors in~\cite{DL},
which contains a formula for the braid index of a rational link $K$,
represented by a canonical minimal diagram $D$ of $K$. The proof
of the formula is quite different from Murasugi's approach, the formula
itself is in fact just one of the applications of a more
general result in~\cite{DL}, which can be applied to compute the braid
index of many other knot and link families, including all alternating
Montesinos links~\cite{DL}. The more general validity of the formula
also makes it directly applicable to a rational link $K$ (without using
its mirror image) even if the numerator and denominator of the
corresponding rational number are both odd. 

In this paper we take a completely different approach. We develop a
general procedure which transforms (when this is possible) a continued
fraction representing an alternating link diagram into a continued
fraction whose partial denominators are all even. A continued fraction
representing an alternating link diagram is a continued fraction in
which the signs of the partial denominators do not alternate. We call
these {\em nonalternating} continued fractions and we partition their
partial denominators into {\em primitive blocks}. The conversion into
all-even form may be performed on these primitive blocks essentially
independently (except for some easily predictable propagation of
signs). The details of these purely arithmetic manipulations are given
in Section~\ref{sec:conv}. With this conversion at hand we have a tool
that allows us to transform Murasugi's formula for the braid index~\cite{Mu}
and the Lickorish-Millet formula for the HOMFLY
polynomial~\cite{Li-Mi} directly. As explained in
Section~\ref{sec:transf}, the primitive blocks also have the property
that the crossing sings in an alternating rational link are constant
within a block and opposite in adjacent blocks. This observation allows
us to phrase our formulas in terms of the crossing signs in a manner
that is similar to the main result in~\cite{DL}. Our new braid index
formula is presented in Section~\ref{sec:braid} and a new HOMFLY
polynomial formula is presented in Section~\ref{HOMFLY_section}.   
The connection between our present braid index formula and the one
derived in~\cite{DL} is explained in Section~\ref{sec:connect}.

The proofs of the results presented in this paper rely on representing the link
diagram in all-even form even if they are stated in terms of a minimal
representation. Since taking the mirror image  does not change the braid
index and changes the HOMFLY polynomial only to the extent of a simple
substitution $a\mapsto a^{-1}$, the formulas we find remain useful even
in the case when the rational link diagram has no all-even
representation. It is an interesting question of future research to
develop a HOMFLY formula that is independent of the existence of the
all-even representation and is comparable in this sense to the braid
index formula presented in~\cite{DL}. 

\medskip

\section{Finite simple continued fractions}\label{sec:cf}

We define a 
finite simple continued fraction as an
expression of the form 
\begin{equation}
  \label{eq:cf}
[c_0,\ldots,c_n]=c_0+\cfrac{1}{c_1+\cfrac{1}{c_2+\ddots\cfrac{1}{c_{n-1}+\cfrac{1}{c_n}}}}, 
\end{equation}
where the {\em partial denominators} $c_0,\ldots,c_n$ are
integers and $c_n\neq 0$. If $c_n\neq 0$, we can evaluate the 
expression using algebra and we obtain a rational number
$p/q$. Conversely, as it is well-known, every rational number $p/q$ may
be written as a finite continued fraction in such a way, that only the
first partial denominator $c_0$ may be zero or negative, all other partial
denominators are positive. This representation is unique up to the
possibility of replacing $[c_0,\ldots,c_{n-1},1]$ with
$[c_0,\ldots,c_{n-1}+1]$ or, conversely, replacing $[c_0,\ldots,c_n]$
where $c_n>1$ with $[c_0,\ldots,c_n-1,1]$.

Equivalently, we may introduce
$$
M(c)=\begin{pmatrix}
c & 1\\
1 & 0\\
\end{pmatrix}
$$
for each integer $c$, and then the following statement is easily shown
by induction on $n$ (cf.~\cite[p. 205]{Cromwell}).
\begin{proposition}
If the integers $c_0,c_1,\ldots, c_n$ satisfy $c_n\neq 0$,
then the value $p/q$ of $[c_0,\ldots,c_n]$ (for the integers $p$ and
$q$) is given by   
\begin{equation}
\begin{pmatrix}
p\\
q\\
\end{pmatrix}
=
M(c_0)M(c_1)\cdots M(c_n)
\begin{pmatrix}
1\\
0\\
\end{pmatrix}.
\label{eq:mdef}
\end{equation}  
\end{proposition}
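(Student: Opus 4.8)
The plan is to prove the identity by induction on $n$, peeling off the first partial denominator $c_0$ and exploiting the defining recursion of a continued fraction,
\begin{equation*}
[c_0,c_1,\ldots,c_n]=c_0+\cfrac{1}{[c_1,\ldots,c_n]}.
\end{equation*}
The reason to peel from the front rather than the back is that the truncated fraction $[c_1,\ldots,c_n]$ still satisfies the hypothesis of the proposition (its last partial denominator is $c_n\neq 0$), so the induction hypothesis applies to it verbatim; peeling $c_n$ off the back would instead produce $[c_0,\ldots,c_{n-1}]$, whose last entry $c_{n-1}$ could vanish. For the base case $n=0$ one simply checks that $M(c_0)\binom{1}{0}=\binom{c_0}{1}$, which represents $[c_0]=c_0/1$, as desired.

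For the inductive step, set $\binom{p'}{q'}\df M(c_1)\cdots M(c_n)\binom{1}{0}$, so that by the induction hypothesis $[c_1,\ldots,c_n]=p'/q'$. The whole computation then reduces to the single matrix identity
\begin{equation*}
M(c_0)\begin{pmatrix}p'\\q'\end{pmatrix}
=\begin{pmatrix}c_0 & 1\\ 1 & 0\end{pmatrix}\begin{pmatrix}p'\\q'\end{pmatrix}
=\begin{pmatrix}c_0p'+q'\\ p'\end{pmatrix},
\end{equation*}
together with the arithmetic verification
\begin{equation*}
\frac{c_0p'+q'}{p'}=c_0+\frac{q'}{p'}=c_0+\frac{1}{p'/q'}=c_0+\frac{1}{[c_1,\ldots,c_n]}=[c_0,\ldots,c_n].
\end{equation*}
Since $M(c_0)\cdots M(c_n)\binom{1}{0}=M(c_0)\binom{p'}{q'}$ by associativity, the vector produced by the full matrix product is exactly $\binom{c_0p'+q'}{p'}$, and the displayed computation shows that its ratio is the value of $[c_0,\ldots,c_n]$, closing the induction.

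The one point that needs care — and the only place where the phrase ``value $p/q$'' must be interpreted — is the possibility that a denominator vanishes, i.e.\ that $[c_1,\ldots,c_n]=0$ and hence $p'=0$. In that event the recursion $c_0+1/[c_1,\ldots,c_n]$ has no finite value and the matrix product returns $q=p'=0$. The cleanest way to absorb this is to read each $M(c)$ as the M\"obius transformation $z\mapsto c+1/z$ acting on the projective line $\R\cup\{\infty\}$, with $\binom{1}{0}$ representing $\infty$; then the identity holds uniformly, the composition of these transformations literally \emph{is} the continued fraction, and ``$p/q$'' is understood in $\R\cup\{\infty\}$. Alternatively, under the standing assumption that the fraction evaluates to a genuine rational number (as is always the case for the continued fractions attached to rational links), every intermediate denominator is nonzero and the induction proceeds with $p/q\in\R$ throughout. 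I expect this bookkeeping about vanishing denominators to be the only genuine subtlety; the algebraic heart of the argument is the one-line matrix identity displayed above.
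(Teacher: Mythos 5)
Your proof is correct and follows exactly the route the paper indicates: the paper offers no written proof, stating only that the identity "is easily shown by induction on $n$" (citing Cromwell), and your front-peeling induction with the one-line matrix identity is precisely that standard argument. Your attention to the case of a vanishing intermediate denominator, resolved via the projective/M\"obius reading, is a welcome refinement consistent with the paper's conventions $p/0=\infty$ and $p/\infty=0$ and with its remark that the product vector is never null.
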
  
Using Equation~(\ref{eq:mdef}) we may extend the definition of the
evaluation $[c_0,c_1,\ldots,c_n]$ to any finite sequence
$c_0,c_1,\ldots,c_n$ of integers. We will set $p/0=\infty$ when $p\neq
0$ and we will leave $0/0$ undefined. Note that, the determinant of
$M(c)$ is $-1$, 
regardless of the value of $c$, and so the vector given in 
Equation~(\ref{eq:mdef}) is never the null vector.

It is easy to verify directly that extending the evaluation of
Equation~(\ref{eq:mdef}) to all finite sequences of integers amounts to
adding the following rules to the evaluation of~(\ref{eq:cf}). We set 
$$
\frac{p}{0}=\infty \quad\mbox{for $p\neq 0$ and}\quad \frac{p}{\infty}=0
\quad\mbox{all $p$}.
$$

A key equality whose variants we will be using is the following formula
of Lagrange (see Lagrange's Appendix to Euler's Algebra~\cite{Eu} cited
in~\cite{Ka-Go}),
\begin{equation}
\label{eq:Lagrange}  
[a,-b]=a-\frac{1}{b}=a-1-\frac{1}{1+\frac{1}{b-1}}=[a-1,1,b-1]. 
\end{equation}  

A slightly generalized variant of~(\ref{eq:Lagrange}) is the following.
\begin{proposition} 
  \label{prop:e}
For $\delta\in \{-1,1\}$, any generalized finite simple continued
fraction $[c_0,\ldots,c_n]$ satisfies
$$
[c_0,\ldots,c_i,\ldots,c_j,\ldots,
  c_n]
=[c_0,\ldots,c_i+\delta,-\delta,\delta-c_{i+1},
  -c_{i+2}, \ldots,-c_j,\ldots,-c_n].     
$$
\end{proposition}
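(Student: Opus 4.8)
The plan is to work entirely with the matrix encoding from~(\ref{eq:mdef}). Since the value of a generalized continued fraction $[c_0,\dots,c_n]$ is the ratio $p/q$ of the entries of $M(c_0)\cdots M(c_n)\,(1,0)^T$, it suffices to prove that the two sides of the claimed identity produce column vectors that differ by a nonzero scalar. Both sides share the common prefix $M(c_0)\cdots M(c_{i-1})$, which I would factor out; because multiplying two proportional vectors on the left by the same matrix preserves proportionality, the whole problem reduces to a statement about the two ``tails'' starting at index $i$, applied to $(1,0)^T$. The vectors involved are never zero, since $\det M(c)=-1$ forces each matrix product to be invertible.

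Next I would isolate the effect of negating the entries $c_{i+2},\dots,c_n$. Setting $D=\mathrm{diag}(1,-1)$, a one-line computation gives the conjugation identity $M(-c)=-D\,M(c)\,D$; since $D^2=I$, telescoping across the block of $m=n-i-1$ negated factors yields $M(-c_{i+2})\cdots M(-c_n)=(-1)^m\,D\,M(c_{i+2})\cdots M(c_n)\,D$ (read as $I$ when the block is empty, i.e.\ $i=n-1$). This converts the negated tail back into the original, un-negated matrices at the cost of a sign and two flanking copies of $D$.

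The crux is the three-term identity handling the front of the transformation, namely
\[
M(c_i+\delta)\,M(-\delta)\,M(\delta-c_{i+1})\,D \;=\; \delta\,M(c_i)\,M(c_{i+1}),
\]
which I would verify by direct multiplication, the only nontrivial input being $\delta^2=1$. This is the step I expect to carry all the weight; everything else is bookkeeping. Combining it with the negation identity and re-inserting $D^2=I$ between the front and the tail collapses the entire right-hand tail to $\delta(-1)^m\,M(c_i)M(c_{i+1})\cdots M(c_n)\,D$.

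Finally I would apply this tail to $(1,0)^T$. Because $D(1,0)^T=(1,0)^T$, the trailing $D$ disappears, leaving exactly $\delta(-1)^m$ times the left-hand tail vector $M(c_i)\cdots M(c_n)\,(1,0)^T$. Restoring the common prefix shows the two full vectors differ by the nonzero scalar $\delta(-1)^m$, so their ratios $p/q$ agree and the continued fractions are equal. As a sanity check I would confirm that the special case $i=0$, $\delta=-1$ recovers Lagrange's identity~(\ref{eq:Lagrange}). The main obstacle here is organizational---keeping the signs and the placement of the $D$'s straight---rather than conceptual.
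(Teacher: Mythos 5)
Your proof is correct and is essentially the paper's own argument with the details filled in: the paper's entire proof is the single identity $M(c_{i})M(c_{i+1})\left(\begin{smallmatrix}p\\q\end{smallmatrix}\right)=M(c_{i}+\delta)M(-\delta)M(\delta-c_{i+1})\left(\begin{smallmatrix}\delta p\\-\delta q\end{smallmatrix}\right)$ with verification left to the reader, and your crux identity $M(c_i+\delta)M(-\delta)M(\delta-c_{i+1})D=\delta\,M(c_i)M(c_{i+1})$ is exactly that equation restated in matrix form. Your conjugation lemma $M(-c)=-D\,M(c)\,D$ just makes explicit the step the paper leaves implicit, namely that negating the tail multiplies its evaluation vector by $\pm D$.
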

This is a direct consequence of the equation
$$
M(c_{i})M(c_{i+1})
\begin{pmatrix}
  p\\
  q\\
 \end{pmatrix} 
=M(c_{i}+\delta)M(-\delta)M(\delta-c_{i+1})
\begin{pmatrix}
\delta  p\\
-\delta  q\\
 \end{pmatrix} 
$$
that holds for any pair of numbers $(p,q)$. The direct verification is
left to the reader.  

In our applications most of the time we will be interested in finite
simple continued fractions satisfying $c_1\cdots c_n\neq 0$.
\begin{definition}
We call a finite simple continued fraction $[c_0,c_1,\ldots,c_n]$ {\em
  nonsingular}  if it satisfies $c_1\cdots c_n\neq 0$, otherwise we call
it {\em singular}. 
\end{definition}  

Applying Proposition~\ref{prop:e} to a nonsingular
$[c_0,c_1,\ldots,c_n]$ results in a singular continued fraction only if
$c_i=-\delta$ or $c_{i+1}=\delta$. In all of these situations we
may return to nonsingular finite continued fractions by using the
following rule.
\begin{lemma}
\label{lem:add}
  We have
 $$
 [c_0,c_1,\ldots, c_{j-1},0,c_{j+1}, \ldots, c_n]
 =
  [c_0,c_1,\ldots, c_{j-1}+c_{j+1}, \ldots, c_n].
 $$
\end{lemma}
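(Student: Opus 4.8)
The plan is to reduce everything to the matrix encoding of the evaluation provided by Equation~(\ref{eq:mdef}). The essential observation is that the matrix associated to the partial denominator $0$ is the transposition matrix
\[
M(0)=\begin{pmatrix} 0 & 1 \\ 1 & 0 \end{pmatrix}.
\]
Concretely, I would verify the purely local $2\times 2$ identity
\[
M(c_{j-1})\,M(0)\,M(c_{j+1})=M(c_{j-1}+c_{j+1})
\]
by multiplying the three matrices out: first $M(c_{j-1})M(0)=\begin{pmatrix} 1 & c_{j-1} \\ 0 & 1 \end{pmatrix}$, and then multiplying this on the right by $M(c_{j+1})$ produces $\begin{pmatrix} c_{j-1}+c_{j+1} & 1 \\ 1 & 0 \end{pmatrix}$, which is exactly $M(c_{j-1}+c_{j+1})$.

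With this identity in hand I would write each side of the claimed equality through Equation~(\ref{eq:mdef}). The vector $(p,q)^{T}$ attached to the left-hand side is
\[
M(c_0)\cdots M(c_{j-1})\,M(0)\,M(c_{j+1})\cdots M(c_n)\begin{pmatrix} 1 \\ 0 \end{pmatrix},
\]
and replacing the central factor $M(c_{j-1})M(0)M(c_{j+1})$ by $M(c_{j-1}+c_{j+1})$ yields precisely the matrix product defining the right-hand side. Hence the two products of matrices agree, the associated vectors $(p,q)^{T}$ coincide, and therefore the two continued fractions evaluate to the same element of $\mathbf{Q}\cup\{\infty\}$.

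The only point requiring a word of care is that the extended evaluation leaves $0/0$ undefined and uses the convention $p/0=\infty$ for $p\neq 0$. Since $\det M(c)=-1$ for every $c$, each of the two matrix products is invertible, so it sends the nonzero vector $(1,0)^{T}$ to a nonzero vector; thus $(p,q)\neq(0,0)$ and the indeterminate case never arises. Because the two vectors are literally equal, they share the same second coordinate $q$, so the equality of values holds whether or not $q=0$, i.e.\ it respects the $\infty$ convention. I do not expect any genuine obstacle here: the entire content is the elementary matrix identity above, and the rest is bookkeeping within the already-established framework of Equation~(\ref{eq:mdef}).
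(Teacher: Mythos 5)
Your proof is correct and follows essentially the same route as the paper, which also derives the lemma directly from the matrix identity $M(c_{j-1})M(0)M(c_{j+1})=M(c_{j-1}+c_{j+1})$; you merely spell out the multiplication and the (harmless) bookkeeping about the $\infty$ convention that the paper leaves implicit.
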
  
This is a direct consequence of $M(c_{i-1})M(0)M(c_{i+1})=M(c_{i-1}+c_{i+1})$.
In the case when $c_i=-\delta$, Proposition~\ref{prop:e} yields
$$
[c_0,\ldots,c_{i-1},-\delta,c_{i+1}\ldots,c_j,\ldots,
  c_n]
=
[c_0,\ldots,c_{i-1},0,-\delta,\delta-c_{i+1},\ldots,
  -c_j,\ldots,-c_n].  
$$
Combining this equation with Lemma~\ref{lem:add} we obtain
$$
[c_0,\ldots,c_{i-1},-\delta,c_{i+1}\ldots,c_j,\ldots,
  c_n]
=
[c_0,\ldots,c_{i-1}-\delta,\delta-c_{i+1},\ldots,
  -c_j,\ldots,-c_n].  
$$

Using Proposition~\ref{prop:e} and Lemma~\ref{lem:add} we may transform
any finite simple continued fractions in one of the two {\em standard
  forms} as defined in Definition~\ref{nonalternatingform} below. 

\begin{definition}
\label{nonalternatingform}
A finite simple continued fraction $[c_0,\ldots,c_n]$ is in {\em nonalternating
denominator form} if $c_1\cdots c_n\neq 0$, the integers $c_1,\ldots,
c_n$ all have the same sign, and $c_0$ is either zero or has the same
sign as all the other $c_i$-s. On the other hand, 
a finite continued fraction $[c_0,\ldots,c_n]$ is in {\em even
  denominator form} if $c_1\cdots c_n\neq 0$ and the integers
$c_0,\ldots,c_{n-1}$ are all even integers. 
\end{definition} 

The following statement is a variant of the well-known uniqueness result
on the standard continued fraction expansion of a rational number.

\begin{lemma}
\label{lem:nonaltu}  
Every rational number $p/q$ has a representation in the {\em
  nonalternating denominator form}. This form is unique up to the
possibility of replacing $[c_0,\ldots,c_n]$ with $[c_0,\ldots,c_n-1,1]$
when $c_n>1$ or with  $[c_0,\ldots,c_n+1,-1]$ when $c_n<-1$.
\end{lemma}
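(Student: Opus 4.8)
The plan is to reduce everything to the classical existence-and-uniqueness theorem for continued fractions with positive partial denominators, recalled at the beginning of this section (Section~\ref{sec:cf}), splitting according to the sign of $p/q$ and passing between the positive and negative regimes by a single \emph{negation identity}. That identity is
\[
[-c_0,-c_1,\ldots,-c_n]=-[c_0,c_1,\ldots,c_n],
\]
valid for any integer sequence under the extended evaluation of Equation~(\ref{eq:mdef}). As with Proposition~\ref{prop:e}, it follows from a one-line matrix computation: writing $D=\operatorname{diag}(-1,1)$ one checks $M(-c)=-D\,M(c)\,D$, so that $M(-c_0)\cdots M(-c_n)=(-1)^{n+1}D\,M(c_0)\cdots M(c_n)\,D$ (using $D^2=I$), and applying this to $(1,0)^T$ negates the resulting value. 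I will also use the elementary fact that a continued fraction whose partial denominators are all $\ge 1$ has value $\ge 1$; consequently a \emph{classical} form $[c_0,\ldots,c_n]$ (meaning $c_1,\ldots,c_n\ge 1$) has value in $(c_0,c_0+1]$ when $n\ge 1$ and value $c_0$ when $n=0$, so a positive value forces $c_0\ge 0$. By the negation identity, a nonalternating form of \emph{negative type} (all $c_i\le -1$ for $i\ge 1$ and $c_0\le 0$) then has value $\le 0$.

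For existence I would argue as follows. If $p/q\ge 0$, the classical form $[c_0,\ldots,c_n]$ has $c_0\ge 0$ by the sign remark above, and since $c_1,\ldots,c_n\ge 1$ it is already in nonalternating denominator form. If $p/q<0$, I take the classical form $[d_0,\ldots,d_m]$ of $-p/q>0$ (so $d_0\ge 0$ and $d_i\ge 1$) and apply the negation identity: $[-d_0,\ldots,-d_m]$ evaluates to $p/q$ and has $-d_0\le 0$ with all later denominators $\le -1$, hence is nonalternating of negative type.

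For uniqueness I would first treat $p/q>0$ by showing that the nonalternating forms of $p/q$ are \emph{exactly} its classical forms. Indeed, any nonalternating form of a positive number cannot be of negative type (those have value $\le 0$), so it is of positive type, i.e.\ $c_0\ge 0$ and $c_1,\ldots,c_n\ge 1$, a classical form; conversely every classical form of $p/q>0$ has $c_0\ge 0$ and is therefore nonalternating. The quoted classical uniqueness then says these agree up to the move $[c_0,\ldots,c_n]\leftrightarrow[c_0,\ldots,c_n-1,1]$ for $c_n>1$, which is precisely the first replacement in the statement. The case $p/q=0$ is immediate, since a positive- or negative-type tail of positive length forces a nonzero value, leaving $[0]$ as the only nonalternating form. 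Finally, for $p/q<0$ the negation identity is a sign-reversing involution carrying the nonalternating forms of $p/q$ bijectively onto those of $-p/q>0$; it sends the move $[d_0,\ldots,d_m]\leftrightarrow[d_0,\ldots,d_m-1,1]$ to $[c_0,\ldots,c_n]\leftrightarrow[c_0,\ldots,c_n+1,-1]$ with $c_n=-d_m<-1$, which is the second replacement, so uniqueness for $p/q<0$ follows from the positive case.

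The main obstacle I anticipate is not any single computation but the sign bookkeeping that pins the \emph{type} of a nonalternating form (all-positive versus all-negative tail) to the sign of its value, together with the boundary behavior at $c_0=0$ and at value $0$. Getting this right is exactly what prevents a nonalternating form of a positive rational from secretly being of negative type or from carrying a negative leading denominator; it is the crux that lets the classical uniqueness transfer verbatim and that makes the two admissible replacement moves correspond to each other under negation.
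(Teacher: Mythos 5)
Your proof is correct and takes essentially the approach the paper intends: the paper states this lemma without proof, calling it ``a variant of the well-known uniqueness result on the standard continued fraction expansion,'' and your reduction to that classical result --- via the negation identity $[-c_0,\ldots,-c_n]=-[c_0,\ldots,c_n]$ (proved by the same matrix manipulation style as Proposition~\ref{prop:e}) together with the observation that the sign of the value forces the type of a nonalternating form --- is exactly the routine verification being elided. The sign bookkeeping you flag as the crux is handled correctly, and the correspondence of the two replacement moves under negation is the right way to transfer the classical uniqueness to negative rationals.
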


The nonalternating representations of a positive and a negative rational
number of the same absolute value are connected by 
the obvious equality 
$[-c_0,\ldots,-c_n]=(-1)\cdot [c_0,\ldots,c_n]$.    

The following statement is a generalization of the observation made in~\cite[Lemma 2]{Du}. Its proof is essentially the same as that given in~\cite[Lemma 2]{Du}. 

\begin{lemma}
\label{lem:evenden}  
Every rational number $p/q$ has a unique representation
$[c_0,\ldots,c_n]$ as a finite continued fraction in an even denominator form. 
The partial denominator $c_n$ is even if and only if the product $pq$ is
even. 
\end{lemma}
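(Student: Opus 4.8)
The plan is to establish the three assertions in turn: existence of an even denominator form by a greedy ``even division'' procedure, uniqueness by showing that each partial denominator is forced, and the parity statement by reading the matrix identity~(\ref{eq:mdef}) modulo~$2$. Throughout I work with $p/q$ in lowest terms, which costs nothing: since $\det\bigl(M(c_0)\cdots M(c_n)\bigr)=\pm1$, the vector in~(\ref{eq:mdef}) is primitive, so any even denominator form automatically evaluates to a reduced fraction.

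For existence I would induct on $|q|$. If $|q|=1$ then $p/q$ is an integer $m$, and $[m]$ is vacuously in even denominator form (it has no interior partial denominators); this is the base case. If $|q|\ge2$ then $\gcd(p,q)=1$ forces $p/q$ to be a non-integer, and there is a unique even integer $c_0$ for which $r:=p-c_0q$ lies in the window $(-|q|,|q|]$. Since $\gcd(p,q)=1$ and $|q|\ge2$, neither $r=0$ nor $|r|=|q|$ can occur, so $0<|r|<|q|$. Writing $p/q=c_0+1/(q/r)$ with $c_0$ even and $\gcd(q,r)=\gcd(q,p)=1$, the induction hypothesis applies to $q/r$, whose denominator has strictly smaller absolute value; because $|q/r|>1$ the leading term of its even denominator form is nonzero, so prepending $c_0$ again yields an even denominator form of $p/q$. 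Termination is immediate, as $|\text{denominator}|$ strictly decreases at each step. (Alternatively, existence can be obtained from the nonalternating form of Lemma~\ref{lem:nonaltu} by repeatedly applying Proposition~\ref{prop:e} to make each odd interior denominator even, cleaning up any resulting zeros with Lemma~\ref{lem:add}.)

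For uniqueness the key point is a tail estimate: if $[d_0,\ldots,d_m]$ is in even denominator form with $m\ge1$, then $|[d_1,\ldots,d_m]|\ge1$, because $d_1,\ldots,d_{m-1}$ are even and nonzero (hence $|d_i|\ge2$) while $d_m\ne0$, and a downward induction using $[d_i,\ldots,d_m]=d_i+1/[d_{i+1},\ldots,d_m]$ gives the bound. Consequently $1/[d_1,\ldots,d_m]\in[-1,1]$, so $x:=[d_0,\ldots,d_m]$ satisfies $|x-d_0|\le1$, with equality only when $x$ is an integer. Thus, as long as the running value is not an integer, $d_0$ is pinned down as the unique even integer within distance $<1$ of $x$; stripping it off and inducting on the length forces all partial denominators to agree. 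The main obstacle is exactly the terminal behavior: when the running value becomes an integer the tail estimate is attained, the inequality alone no longer singles out the last even denominator, and one must invoke the stopping convention (together with the parity below) to pin down the representation. This is the same delicacy already visible in the exceptional replacement recorded in Lemma~\ref{lem:nonaltu}, and it is where the argument needs the most care.

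Finally, the parity statement falls out cleanly from~(\ref{eq:mdef}) read modulo~$2$. Since $c_0,\ldots,c_{n-1}$ are even, $M(c_i)\equiv J:=\begin{pmatrix}0&1\\1&0\end{pmatrix}\pmod 2$ for $0\le i\le n-1$, while $M(c_n)\binom{1}{0}=\binom{c_n}{1}$. Using $J^2=I$ we obtain $\binom{p}{q}\equiv J^{n}\binom{c_n}{1}\pmod 2$, which equals $\binom{c_n}{1}$ when $n$ is even and $\binom{1}{c_n}$ when $n$ is odd. In both cases exactly one of $p,q$ is odd and $pq\equiv c_n\pmod 2$, so $c_n$ is even precisely when $pq$ is even, as claimed.
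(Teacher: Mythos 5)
The paper itself gives no argument for this lemma beyond the remark that its proof is ``essentially the same as that given in \cite[Lemma 2]{Du}'', which treats the all-even expansion of $p/q$ with $pq$ even; so you are supplying the only actual proof here, and two of your three steps are sound. The greedy existence argument (choose the unique even $c_0$ with $p-c_0q\in(-|q|,|q|]$ and recurse on $q/r$) is correct, and your parity argument is clean and has the added virtue of applying to \emph{every} even denominator form, not just the greedy one: reducing \eqref{eq:mdef} modulo $2$ using $M(c)\equiv\left(\begin{smallmatrix}0&1\\1&0\end{smallmatrix}\right)$ for even $c$ does give $pq\equiv c_n\pmod 2$. (One slip there: when $c_n$ is odd your congruences give $p\equiv q\equiv 1$, so ``exactly one of $p,q$ is odd'' should read ``at least one''; the conclusion $pq\equiv c_n$ is unaffected.)

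The genuine gap is the one you flagged yourself and then left unresolved: the terminal case of the uniqueness argument. It cannot be repaired as the lemma is literally stated, because uniqueness actually fails whenever some tail of the expansion is an odd integer. Concretely, $1/3=[0,3]=[0,2,1]=[0,4,-1]$, and all three expressions satisfy Definition~\ref{nonalternatingform}: in each, all denominators before the last are even and nonzero. Your own tail estimate locates the problem exactly: $|[d_1,\ldots,d_m]|=1$ is attained precisely when the running value is an integer, and if that integer is odd there are three admissible ways to stop. To close the proof you must do one of two things explicitly. Either restrict the uniqueness claim to $pq$ even: then your parity step forces $c_n$ to be even, so every entry after $c_0$ has absolute value at least $2$, the tail estimate sharpens to $|[d_i,\ldots,d_m]|>1$, an integer running value is necessarily even and can only be the final denominator, and the induction closes. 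Or, for general $p/q$, add the normalization ``$n=0$ or $|c_n|\geq 2$'' to the definition --- this is exactly what your greedy construction produces, since the last running value is $1/(x-d_0)$ with $0<|x-d_0|<1$ --- and then rule out the $c_n=\pm 1$ alternatives. Appealing to ``the stopping convention'' is not enough, because the paper's definition of even denominator form contains no such convention.
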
  

\section{Converting the nonalternating denominator form into the even
  denominator form}
\label{sec:conv}

\begin{lemma}
\label{lem:2s}  
Assume there is an $i$ such that three consecutive partial denominators
$c_i$, $c_{i+1}$ and  $c_{i+2}$ in the nonsingular simple finite
continued fraction $[c_0,\ldots,c_n]$ have the same sign as
$\delta\in \{-1,1\}$. Then the continued fraction, obtained from $[c_0,\ldots,c_n]$ by the
following procedure, has the same evaluation as $[c_0,\ldots,c_n]$:
\begin{enumerate}
\item Replace $c_i$ with $c_i+\delta$ and $c_{i+2}$ with
  $c_{i+2}+\delta$.
\item Replace $c_{i+1}$ with $|c_{i+1}|-1$ copies of $2$. (In
  particular, simply remove $c_{i+1}$ if it is equal to $\delta$).
\item Keep the sign of $c_i+\delta$ and replace each subsequent
  sign in such a way that signs alternate up to and including the
  changed copy of $c_{i+2}$;
\item Replace $c_j$ with
  $(-1)^{|c_{i+1}|}\cdot c_j$ for each $j>i+2$.   
\end{enumerate}
\end{lemma}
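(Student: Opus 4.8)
The plan is to prove the identity by induction on $m=|c_{i+1}|$, the magnitude of the middle partial denominator, which governs the number of copies of $2$ produced in step (2). First I would record explicitly what the four-step procedure outputs, so that the induction has a concrete target. Writing $c_{i+1}=\delta m$ with $m\ge 1$, the assertion to be proved is that
\[
[c_0,\ldots,c_i,\delta m,c_{i+2},\ldots,c_n]
=[c_0,\ldots,c_i+\delta,\,-2\delta,\,2\delta,\ldots,(-1)^{m-1}2\delta,\,(-1)^m(c_{i+2}+\delta),\,(-1)^m c_{i+3},\ldots,(-1)^m c_n].
\]
Here the entries $(-1)^k2\delta$ for $k=1,\ldots,m-1$ are the $m-1$ signed copies of $2$ from step (2), their signs realizing the alternation of step (3) that starts from the sign $\delta$ of $c_i+\delta$; the entry $(-1)^m(c_{i+2}+\delta)$ is the modified copy of $c_{i+2}$, its magnitude $|c_{i+2}|+1$ coming from step (1) and its sign $(-1)^m\delta$ from the alternation; and the factor $(-1)^m$ on the tail is step (4). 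Reducing the verification to this closed form is the first task; everything afterward is checking that the two tools already available reproduce it.

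For the base case $m=1$ there are no copies of $2$, and a single application of Proposition~\ref{prop:e} at position $i$ gives
\[
[c_0,\ldots,c_i,\delta,c_{i+2},\ldots,c_n]
=[c_0,\ldots,c_i+\delta,-\delta,\delta-\delta,-c_{i+2},\ldots,-c_n]
=[c_0,\ldots,c_i+\delta,-\delta,0,-c_{i+2},\ldots,-c_n],
\]
after which Lemma~\ref{lem:add} merges $-\delta,0,-c_{i+2}$ into $-(c_{i+2}+\delta)$, yielding exactly the $m=1$ instance of the displayed form.

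For the inductive step, assume the claim for middle magnitude $m$ and let $c_{i+1}=\delta(m+1)$. Applying Proposition~\ref{prop:e} once at position $i$ converts the triple into
\[
[c_0,\ldots,c_i+\delta,\,-\delta,\,-\delta m,\,-c_{i+2},\,-c_{i+3},\ldots,-c_n],
\]
since $\delta-\delta(m+1)=-\delta m$. The three entries $-\delta,\,-\delta m,\,-c_{i+2}$ are consecutive and all have sign $-\delta$, so the inductive hypothesis (with $\delta'=-\delta$ and middle magnitude $m$) applies to them, with $-\delta$ as left anchor and $-c_{i+2}$ as right anchor. The key point, and the one I expect to require the most care, is the sign bookkeeping: the hypothesis turns the left anchor $-\delta$ into $-\delta+\delta'=-2\delta$, which is precisely the first signed copy of $2$ demanded by the $m+1$ case; the $m-1$ interior copies $(-1)^k2\delta'$ become $(-1)^{k+1}2\delta$, i.e. copies $2$ through $m$; and the modified right anchor $(-1)^m(-c_{i+2}+\delta')=(-1)^{m+1}(c_{i+2}+\delta)$ together with the re-signed tail $(-1)^m(-c_j)=(-1)^{m+1}c_j$ match the $(-1)^{m+1}$ prescribed by step (4). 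Thus the alternation begun by the outer application of Proposition~\ref{prop:e} dovetails with the alternation produced by the inductive hypothesis, and collecting the entries gives exactly the closed form for $m+1$.

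The only genuine obstacle is this reconciliation of signs across the two nested applications; the remainder is the arithmetic of $M(c)$ already packaged in Proposition~\ref{prop:e} and Lemma~\ref{lem:add}. I would also note that, because those two tools are matrix identities valid for arbitrary integer sequences, no intermediate singularity causes trouble: the equalities hold at the level of the products $M(c_0)\cdots M(c_n)$, hence for the evaluations $p/q$ under the conventions fixed after Equation~(\ref{eq:mdef}).
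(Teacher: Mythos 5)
Your proof is correct and follows essentially the same route as the paper's: induction on $|c_{i+1}|$, with the base case handled by one application of Proposition~\ref{prop:e} followed by Lemma~\ref{lem:add}, and the inductive step obtained by applying Proposition~\ref{prop:e} once and then invoking the hypothesis on the resulting triple of sign $-\delta$. Your explicit closed form for the output and the sign bookkeeping $(-1)^m(-c_{i+2}+\delta')=(-1)^{m+1}(c_{i+2}+\delta)$ match the paper's final display exactly.
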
  
\begin{proof}
We proceed by induction on $|c_{i+1}|$.  
If $c_{i+1}=\delta$ then the statement may be obtained from 
Proposition~\ref{prop:e} using $-\delta$ and replacing 
$[\ldots c_i,\delta,c_{i+1}\ldots]=[\ldots c_i,\delta-\delta,\delta,-\delta-c_{i+1},-c_{i+2}\ldots]=[\ldots c_i+\delta,-\delta-c_{i+1},-c_{i+2}\ldots].$
Assume the statement is true for $|c_{i+1}|=k$ and
assume $|c_{i+1}|=k+1$. Let $\delta=\sign(c_{i+1})$.
Applying Proposition~\ref{prop:e} once yields
$$
[\ldots, c_i,c_{i+1}, c_{i+2}, \ldots, c_j,\ldots, c_n]
=
[\ldots, c_i+\delta,-\delta,\delta-c_{i+1}, -c_{i+2},
  \ldots, -c_j,\ldots, -c_n]. 
$$
Observe that in the resulting continued fraction, the consecutive
partial denominators $-\delta, \delta-c_{i+1}, $ $-c_{i+2}$ all
have the same sign as $-\delta$ and the absolute value of
$\delta-c_{i+1}$ is $k$. Applying the induction hypothesis to these
three consecutive partial denominators we obtain that our continued
fraction has the same evaluation as 
$$
[\ldots, c_i+\delta,-2\cdot \delta, 2\cdot\delta,\ldots,
  (-1)^{k} \cdot 2\cdot \delta, 
  (-1)^{k+1} (c_{i+2}+\delta), \ldots, (-1)^{k+1} c_j,\ldots,
  (-1)^{k+1} c_n].  
$$
\end{proof}

\begin{proposition}
\label{prop:pblock}
Assume a simple continued fraction $[c_0,\ldots,c_n]$ contains a
contiguous subsequence $c_m,c_{m+1},\ldots,c_{m+2k}$ of partial
denominators, satisfying the following conditions:
\begin{enumerate}
\item the integers $c_m,c_{m+1},\ldots,c_{m+2k}$ all have the same
  sign $\delta$;
\item $c_m$ and $c_{m+2k}$ are odd;
\item $c_{m+2i}$ is even for $1\leq i\leq k-1$.  
\end{enumerate} 
Then the continued fraction, obtained by the following
transformation, has the same evaluation:
\begin{enumerate}
\item replace $c_m$ with $c_m+\delta$ and $c_{m+2k}$ with
  $c_{m+2k}+\delta$;
\item for each  $i\in\{1,\ldots, k-1\}$ replace $c_{m+2i}$ with
  $c_{m+2i}+2 \delta$;
\item for each  $i\in\{1,\ldots, k\}$ replace $c_{m+2i-1}$ with
  $|c_{m+2i-1}|-1$ copies of $2$;
\item keep the sign of $c_m+\delta$ and replace each subsequent
  sign in such a way that signs alternate up to and including the
  changed copy of $c_{m+2k}$;
\item for each $j>m+2k+1$ we replace $c_j$ with
  $(-1)^{|c_{m+1}|+|c_{m+3}|+\cdots +|c_{m+2k-1}|}\cdot c_j$.   
\end{enumerate}  
\end{proposition}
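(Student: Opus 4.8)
The plan is to argue by induction on $k$, peeling off the first three entries of the block with Lemma~\ref{lem:2s} at each step and then appealing to the induction hypothesis for what remains.

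For the base case $k=1$ the block is simply $c_m,c_{m+1},c_{m+2}$: three consecutive partial denominators of the common sign $\delta$ with $c_m$ and $c_{m+2}$ odd. Since Lemma~\ref{lem:2s} requires only that the three entries share a sign and imposes no parity condition, it applies verbatim, and one checks that its prescription coincides with steps (1)--(5) here when $k=1$. Step (2) is vacuous, step (3) reduces to replacing $c_{m+1}$ by $|c_{m+1}|-1$ copies of $2$, the endpoint shifts agree, and the tail factor $(-1)^{|c_{m+1}|}$ matches the exponent in step (5).

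For the inductive step I assume $k\ge 2$ and that the statement holds for blocks of length $2(k-1)+1$. Condition (3) with $i=1$ forces $c_{m+2}$ to be even, so the triple $c_m,c_{m+1},c_{m+2}$ shares the sign $\delta$, and I would first apply Lemma~\ref{lem:2s} to it. This replaces $c_m$ by $c_m+\delta$, expands $c_{m+1}$ into $|c_{m+1}|-1$ copies of $2$, turns $c_{m+2}$ into $(-1)^{|c_{m+1}|}(c_{m+2}+\delta)$, and multiplies every later $c_j$ by $(-1)^{|c_{m+1}|}$. Writing $\delta'=(-1)^{|c_{m+1}|}\delta$, I would then observe that the contiguous subsequence running from the modified $c_{m+2}$ through the modified $c_{m+2k}$ is again a block of the type treated by the proposition, now of length $2(k-1)+1$: all its entries have the single sign $\delta'$, its two endpoints are odd (since $c_{m+2}+\delta$ is odd and $c_{m+2k}$ is odd), and its interior even-indexed entries keep their parity. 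Applying the induction hypothesis to this shorter block completes the transformation, and the task reduces to verifying that the composite of the two operations is precisely (1)--(5) for the full block.

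The magnitudes cause no trouble. The outer endpoint $c_m$ is shifted in absolute value by $1$ just once; the inner endpoint $c_{m+2}$ is shifted by $1$ by the lemma and again by $1$ as an endpoint of the shorter block, producing the total change of $2\delta$ in step (2); the remaining interior even entries and the far endpoint $c_{m+2k}$ inherit their changes of $2\delta$ and $\delta$ directly from the induction hypothesis; and each odd-indexed entry is expanded into copies of $2$ exactly once. The delicate point, and the step I expect to demand the most care, is the sign bookkeeping. Here I would check that the alternating pattern produced by the lemma on the initial segment terminates at $c_{m+2}$ with exactly the sign $\delta'$ at which the induction hypothesis begins its own alternation, so that the two patterns glue into one global alternation from $c_m$ to $c_{m+2k}$ as required by step (4); and that the lemma's tail factor $(-1)^{|c_{m+1}|}$ multiplies the induction hypothesis's factor $(-1)^{|c_{m+3}|+\cdots+|c_{m+2k-1}|}$ to give precisely the exponent in step (5). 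Tracking how the single boundary entry separating the block from the tail is assigned its sign is the one place where an off-by-one in the exponents could slip in, so that is where I would concentrate the verification.
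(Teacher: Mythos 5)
Your proof is correct and follows essentially the same route as the paper: the paper also proves this by repeated application of Lemma~\ref{lem:2s} to the successive overlapping triples $c_{m+2i}, c_{m+2i+1}, c_{m+2i+2}$, sweeping left to right and tracking the propagated signs (illustrated there by a worked example rather than a formal induction). Your induction on $k$, peeling off the first triple and invoking the hypothesis on the remaining block of length $2(k-1)+1$, is just the formalized packaging of that same iteration, and your sign and parity bookkeeping at the glue point is accurate.
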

\begin{proof}
To prove this we repeatedly use Lemma~\ref{lem:2s}. We explain the principle using an example given by the equality 
  $$
  [1,2,\mathbf{3}, 4, \mathbf{2}, 1, \mathbf{6}, 3,\mathbf{5},3]
  =
   [1,2,\mathbf{4}, -2, 2, -2, \mathbf{4}, -\mathbf{8}, 2, -2,\mathbf{6}, 3].
   $$
 In this example, $m=2$ and $k=3$. To help keep track of the strings of entries inserted between $c_2,c_4,c_6$ and $c_8$, these are marked in bold. We replace $c_3,c_5$ and $c_7$ by alternating strings of 2's. As a first step we apply Lemma~\ref{lem:2s} to
 $c_m, c_{m+1},c_{m+2}$. Thus we obtain that our continued fraction has the same
 evaluation as
 $$
 [\ldots, c_m+\delta,-2\delta, 2\delta, \ldots,
   (-1)^{|c_{m+1}|-1} 2\delta,   
   (-1)^{|c_{m+1}|} (c_{m+2}+\delta), \ldots, (-1)^{|c_{m+1}|}
   c_j, \ldots, (-1)^{|c_{m+1}|} c_n].  
 $$
 In our example
  $$
  [1,2,\mathbf{3}, 4, \mathbf{2}, 1, \mathbf{6}, 3,\mathbf{5},3]
  =
  [1,2,\mathbf{4}, -2, 2, -2, \mathbf{3}, 1, \mathbf{6}, 3,\mathbf{5},3].
   $$
  Next we apply Lemma~\ref{lem:2s} to
  $(-1)^{|c_{m+1}|} (c_{m+2}+\delta), (-1)^{|c_{m+1}|} c_{m+3},
  (-1)^{|c_{m+1}|} c_{m+4}$. This results in
  replacing $(-1)^{|c_{m+1}|} (c_{m+2}+\delta)$ with
  $(-1)^{|c_{m+1}|} (c_{m+2}+2\delta)$, $(-1)^{|c_{m+1}|} c_{m+3}$
  with a sequence of length $|c_{m+3}|-1$ in which copies of $2$ and $-2$
  alternate, and $(-1)^{|c_{m+1}|} c_{m+3}$ with $(-1)^{|c_{m+1}|+|c_{m+3}|}
  (c_{m+3}+\delta)$. All partial denominators after this one are
  multiplied by $(-1)^{|c_{m+3}|}$. In our example we obtain 
  $$
  [1,2,\mathbf{3}, 4, \mathbf{2}, 1, \mathbf{6}, 3,\mathbf{5},3]
  =
  [1,2,\mathbf{3}, -2,2,-2, \mathbf{4}, \mathbf{-7}, -3,\mathbf{-5},-3].
  $$
Note that $c_5=1$ is replaced with an empty sequence. We continue in a
similar fashion, applying Lemma~\ref{lem:2s} to the consecutive partial
denominators
$(-1)^{|c_{m+1}|+|c_{m+3}|+\cdots+|c_{m+2i-1}|} (c_{m+2i}+\delta)$,
$(-1)^{|c_{m+1}|+|c_{m+3}|+\cdots+|c_{m+2i-1}|} c_{m+2i+1}$ and 
$(-1)^{|c_{m+1}|+|c_{m+3}|+\cdots+|c_{m+2i-1}|} c_{m+2i+2}$ for
$i=1,2,\ldots,k-1$.
\end{proof}  

\begin{definition}
We say that a subsequence of consecutive partial denominators
$(c_m,\ldots,c_{m+2k})$ in a nonsingular finite simple continued
fraction $[c_0,\ldots, c_n]$ is a {\em primitive block} if it satisfies one 
of the following criteria: 
\begin{enumerate}
\item $k=0$ and $c_m$ is even;
\item $k\geq 1$ and $(c_m,\ldots,c_{m+2k})$ satisfies the
  hypotheses in Proposition~\ref{prop:pblock};
\item $k=0$, $m=n$ and $c_n$ can be odd or even.
\end{enumerate}
We say that a finite simple continued fraction $[c_0,c_1,\ldots,c_n]$
{\em has a primitive block decomposition} if the sequence of its partial
denominators may be written as a concatenation of primitive blocks. We
will say a primitive block is {\em trivial} if $k=0$, otherwise we say
it is nontrivial. We will also call a type (3) trivial primitive block
{\em exceptional}. 
\end{definition}

\begin{remark}
  \label{rem:convert}
{\em  
A primitive block decomposition, if it exists, allows us to use
Proposition~\ref{prop:pblock} to rewrite a 
continued fraction in even denominator form. Indeed, when we apply
Proposition~\ref{prop:pblock} to a nontrivial primitive block
$(c_m,\ldots,c_{m+2k})$, the sign of each $c_i$ satisfying $i<m$ remains
unchanged, and all $c_i$ satisfying $i>2m+1$ get multiplied by the same
power of $(-1)$. Hence the other primitive blocks of the continued
fraction remain primitive blocks, whereas the nontrivial primitive block
$(c_m,\ldots,c_{m+2k})$ is replaced by a concatenation of trivial
primitive blocks. Applying Proposition~\ref{prop:pblock} repeatedly we
may replace all nontrivial primitive blocks by a concatenation of
trivial primitive blocks. These contain even integers, except for the
last one, if that primitive block is exceptional.}
\end{remark}

\begin{example}
  \label{ex:pblock}
  {\em The rational number $1402/1813$ has the nonalternating simple
    continuous fraction expansion $[0, 1, 3, 2, 2, 3, 5, 1, 3]$. This has
    the primitive block decomposition $[0, 1, 3, 2, 2, 3; 5, 1, 3]$ (here primitive blocks are separated by semi-colons). By
    Proposition~\ref{prop:pblock}, we have
    $$
    [0, 1, 3, 2, 2, 3; 5, 1, 3]=
    [0, 2, -2, 2, -4, 2, -4; -5, -1, -3]=
    [0, 2, -2, 2, -4, 2, -4; -6, 4].
    $$

 }   
\end{example}  

\begin{proposition}
\label{prop:upbdec}  
If the primitive block decomposition of a finite simple continued fraction
$[c_0,\ldots,c_n]$ exists, then it is unique.   
\end{proposition}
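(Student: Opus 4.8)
The plan is to induct on the number $n+1$ of partial denominators by peeling off the first block. The crucial preliminary observation is that every primitive block $(c_m,\ldots,c_{m+2k})$ consists of an \emph{odd} number $2k+1$ of entries, so in any decomposition the sequence $c_0,\ldots,c_n$ is cut into contiguous blocks of odd length, the first of which necessarily begins at $c_0$. Hence it suffices to show that the first block of a decomposition is forced by the sequence alone: once the first block is determined, the suffix $c_s,\ldots,c_n$ (where $s$ is the length of the first block) is again a nonsingular finite simple continued fraction on fewer entries, its inherited decomposition is a valid primitive block decomposition, and the induction hypothesis gives its uniqueness. One should check here only the mild compatibility that the exceptional type (3) block, being the unique block allowed to sit at the very end, remains the last block of the suffix.

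To identify the first block I would analyze the parity of $c_0$. If $n=0$ the entire sequence is a single block and there is nothing to prove, so assume $n\ge 1$. If $c_0$ is even, it can be neither the start of a type (2) block (which must begin with an odd entry) nor a type (3) block (which requires the index to equal $n$), so the first block must be the trivial type (1) block $(c_0)$, and this is forced. If $c_0$ is odd, it can be neither type (1) (even) nor type (3) (as $0\ne n$), so the first block is a nontrivial type (2) block $(c_0,\ldots,c_{2k})$ with $k\ge 1$, and everything reduces to pinning down $k$.

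Finally I would show that $k$ is determined by parities. In a type (2) block the even-indexed entries relative to its start, namely $c_m,c_{m+2},\ldots,c_{m+2k}$, follow the rigid pattern odd, even, $\ldots$, even, odd: the first and last are odd while the interior ones are even. Reading from $c_0$, this forces the endpoint $c_{2k}$ to be the \emph{first} odd entry among $c_2,c_4,c_6,\ldots$. Since this location depends only on the sequence, any two valid nontrivial first blocks must coincide. The sign hypothesis does not enter the determination of the endpoint; it is only a validity requirement that any legitimate decomposition already meets, so it creates no additional freedom (should it fail, no nontrivial first block exists and uniqueness is vacuous).

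I expect the genuinely routine steps to be the case bookkeeping and the verification that the suffix inherits a valid decomposition to which induction applies. The conceptual heart of the argument, and the one point that must be gotten exactly right, is the odd/even/$\ldots$/even/odd parity pattern of the even-position entries of a type (2) block: it simultaneously forbids ending a nontrivial block early (its last entry would be even) and extending it past the first odd entry (that entry would then have to be an interior even one). This greedy rigidity is what makes the first block, and hence the whole decomposition, unique.
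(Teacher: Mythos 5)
Your proof is correct and follows essentially the same route as the paper's: induction on the number of partial denominators, peeling off the first primitive block, which is forced to be the trivial block $(c_0)$ when $c_0$ is even and, when $c_0$ is odd, to be the nontrivial block ending at the first odd even-indexed entry. The only cosmetic difference is that the paper phrases that endpoint as the least $i>0$ with $c_{2i}$ odd \emph{and of the same sign as} $c_0$, a condition you correctly observe is automatic once a valid decomposition is assumed to exist, since the interior even-position entries of a nontrivial block are all even.
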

\begin{proof}
Let $[c_0,\ldots,c_n]$ be a finite simple continued fraction with a primitive block decomposition.  We show this primitive block decomposition is unique.
The statement may be shown by induction on $n$. For $n=0$ the
  statement is obvious. For $n>0$ the partial denominator
$c_0$ is in a trivial primitive block by itself if and only if $c_0$ is
even. In that case we may apply the induction hypothesis to
$[c_1,\ldots,c_n]$. If $c_0$ is odd then it is contained in a nontrivial
primitive block that begins with $c_0$. By the definition of a
nontrivial primitive block, the right and of this block can be only at
$c_{2k}$ where $k$ is the least positive integer $i$ such that $c_{2i}$
is odd and of the same sign as $c_0$. We may apply the induction hypothesis to $[c_{2k+1},\ldots,c_n]$.
\end{proof}

\begin{remark}
  \label{rem:upbdec}
{\em The proof of Proposition~\ref{prop:upbdec} amounts to providing a
recursively defined algorithm that finds the unique primitive block
decomposition given that such a decomposition exists.
Assume that $[c_0,\ldots,c_n]$ is a nonalternating simple continuous fraction expansion:
\begin{enumerate}
\item If $c_0$ is even, put it into a trivial primitive
  block, and proceed recursively on $[c_1,\ldots,c_n]$.
\item If $c_0$ is odd and $n=0$ then $c_0$ is in an exceptional trivial
  primitive block.
 \item If $c_0$ is odd and $n>0$ then the nontrivial primitive block
   containing $c_0$ ends at the first odd $c_{2k}$, and we proceed
   recursively on  $[c_{2k+1},\ldots,c_n]$.
 \end{enumerate} 
}
\end{remark}  

Next we need to show that for a rational number $p/q$ we can construct a primitive block
decomposition.

\begin{theorem}
\label{thm:noepb}  
Every nonzero rational number $p/q$ may be written in two ways 
as a finite simple continued fraction in a nonalternating form. Exactly
one of of these nonalternating forms has a primitive block
decomposition. This primitive block decomposition contains no
exceptional trivial primitive block if and only if $pq$ is even.
\end{theorem}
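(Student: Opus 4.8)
The plan is to rest the argument on three ingredients already established: the two-fold ambiguity of the nonalternating form (Lemma~\ref{lem:nonaltu}), the greedy left-to-right decomposition procedure that underlies Proposition~\ref{prop:upbdec} and is spelled out in Remark~\ref{rem:upbdec}, and the conversion of a decomposed nonalternating form into the even denominator form (Remark~\ref{rem:convert} together with the uniqueness in Lemma~\ref{lem:evenden}). By Lemma~\ref{lem:nonaltu}, the nonzero rational $p/q$ has exactly two nonalternating representations: a reduced one $R=[c_0,\ldots,c_m]$ with $|c_m|\ge 2$, and an extended one $E=[c_0,\ldots,c_{m-1},c_m-\eps,\eps]$ with $\eps=\sign(c_m)$. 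These agree on the common prefix $c_0,\ldots,c_{m-1}$ and differ only in the tail; the key elementary observation is that $c_m$ and $c_m-\eps$ carry \emph{opposite} parities, while the appended entry $\eps$ is odd.

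First I would recast ``having a primitive block decomposition'' as ``the greedy algorithm of Remark~\ref{rem:upbdec} succeeds.'' In that algorithm an even partial denominator is swallowed as a trivial block, whereas an odd $c_p$ opens a nontrivial block that must be closed at the first later partial denominator lying an even number of steps ahead that is again odd; the run fails precisely when no such closing odd entry exists before the sequence ends. Since every primitive block has odd length, completing a block flips the parity of the current starting index, so the state of the algorithm at the moment it first needs to read a tail entry is determined entirely by the shared prefix and is therefore identical for $R$ and for $E$.

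The heart of the proof is a short case analysis of this terminal segment, according to whether the algorithm reaches the tail as a fresh block start at position $m$, or is already inside an open nontrivial block whose starting index has the same parity as $m$, or has the opposite parity. In each case I would read off the outcomes for $R$ and $E$ directly: flipping the parity of the last genuine entry, together with the single extra even-offset slot that the odd entry $\eps$ contributes to $E$, toggles whether the terminal nontrivial block can be closed, so that exactly one of the two runs terminates and the other gets stuck. I expect this terminal bookkeeping to be the main obstacle, and in particular the delicate subcase is the one in which the greedy run arrives at position $m$ as a \emph{fresh} start: there one must check with care that the outcomes for $R$ and $E$ are genuinely complementary rather than both successful, since this is where the parity of $c_m$, the index-parity of the open block, and the extra slot supplied by $\eps$ all interact at once.

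Finally, to detect when the surviving decomposition contains an exceptional block, I would apply Proposition~\ref{prop:pblock} blockwise, as in Remark~\ref{rem:convert}, to convert the decomposable form into an even denominator form. By Remark~\ref{rem:convert} all resulting partial denominators are even except possibly the last, which is odd exactly when the terminal block was exceptional. By Lemma~\ref{lem:evenden} this even denominator form is the unique one, and its last partial denominator is even if and only if $pq$ is even. Comparing the parity of that last entry then yields that the decomposition contains no exceptional trivial block if and only if $pq$ is even, which completes the argument.
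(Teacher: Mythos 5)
Your overall strategy coincides with the paper's: both arguments start from the two-fold ambiguity of Lemma~\ref{lem:nonaltu}, run the greedy procedure of Remark~\ref{rem:upbdec} along the shared prefix of the two expansions, and settle the matter by a case analysis of the tail; the ``exceptional block iff $pq$ odd'' part is in both cases an appeal to Remark~\ref{rem:convert} together with Lemma~\ref{lem:evenden}. Your observation that the greedy state upon first reading a tail entry is identical for $R$ and $E$ is correct and is essentially what the paper's Case~1/Case~2 split exploits.

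The gap is that the decisive case analysis is only announced, and the subcase you yourself flag as delicate is exactly where the hoped-for complementarity breaks down. If the greedy run reaches position $m$ as a fresh start and $c_m$ is odd, then in $R$ the entry $c_m$ is the last partial denominator and forms an exceptional trivial block, so $R$ decomposes; but in $E$ the entry $c_m-\eps$ is even, hence a trivial block of type (1), and the appended $\eps$ is an exceptional trivial block, so $E$ decomposes as well. Concretely, $1/3=[0,3]=[0,2,1]$: the first form decomposes as $(0;3)$ and the second as $(0;2;1)$, both legal concatenations of primitive blocks under the paper's definition. So ``exactly one of the two runs terminates'' cannot be established in this subcase, and your description of the algorithm also quietly omits the exceptional escape (an odd entry that happens to be last does not open a block that must be closed --- it is itself a block). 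For what it is worth, the paper's own Case~1 disposes of this subcase with the bare assertion that the last block of $E$ ``does not have the correct format,'' which is not justified either; the uniqueness claim, and both proofs, are solid precisely when $pq$ is even (the only situation invoked in the later sections, since a preferred standard form then exists), because then the decomposition of $R$ cannot end in an exceptional block and the tail analysis goes through exactly as you and the paper describe.
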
  
\begin{proof}
The first sentence recalls Lemma~\ref{lem:nonaltu}. What we need to show
is that exactly one of the nonalternating forms has a primitive block
decomposition. Note that, for $p/q=0$ the only way to write it in
nonalternating form is $p/q=[0]$ where $0$ is a trivial primitive
block.

From now on, without loss of generality, we may assume $p/q>0$. Such a
continued fraction can be written in a nonalternating form in exactly
two ways: as $[c_0,c_1,\ldots, c_n]$ satisfying $c_0\geq 0$, $c_1\cdots
c_n >0$ and $c_n>1$, or as $[c_0,c_1,\ldots, c_{n-1},c_n-1,1]$.

{\bf\noindent Case 1:} $[c_0,c_1,\ldots, c_n]$ has a primitive block
decomposition. By Proposition~\ref{prop:upbdec} this decomposition is
unique.
We need to show that $[c_0,c_1,\ldots, c_{n-1},c_n-1,1]$ cannot have a primitive block
decomposition.
If $c_n$ is even then it is contained in a trivial primitive
block by itself and $[c_0,c_1,\ldots, c_{n-1}]$ has a primitive block
decomposition with no exceptional block at $c_{n-1}$.
Then the partial denominator
$c_n-1$ is an odd number that has to be the left end of a nontrivial
primitive block, that has no right end in $[c_0,c_1,\ldots, c_{n-1},c_n-1,1]$. 
If $c_n$ is odd then there are two choices. Either $c_n$ is the right end of a block that is not exceptional or $c_n$ is an exceptional block. In either case the last block in $[c_0,c_1,\ldots, c_{n-1},c_n-1,1]$ does not have the correct format and  $[c_0,c_1,\ldots, c_{n-1},c_n-1,1]$ cannot have a primitive block decomposition. 

{\bf\noindent Case 2:} $[c_0,c_1,\ldots, c_n]$ has no primitive block
decomposition. We need to show that $[c_0,c_1,\ldots, c_{n-1},$ $c_n-1,1]$ has a primitive block
decomposition. 
The algorithm described in
Remark~\ref{rem:upbdec} applied to $[c_0,c_1,\ldots, c_n]$ finds an $m<n$ such that
$[c_0,\ldots,c_{m-1}]$ has a primitive block decomposition with no
exceptional trivial primitive block at the end, $c_{m}$ is odd but
$c_{m+2i}$ is even for all $m+2i\leq n$. If $n-m$ is even, say $n=m+2k$
then $c_m, c_{m+1}, \ldots, c_{m+2k}-1$ is a nontrivial primitive block
in $[c_0,\ldots, c_{n-1}, c_n-1,1]$ and the last partial denominator $1$
belongs to an exceptional trivial primitive block. If $n-m$ is odd, say
$n=m+2k+1$ then $c_m, c_{m+1}, \ldots, c_{m+2k+1}-1, 1$ is a nontrivial
primitive block in $[c_0,\ldots, c_{n-1}, c_n-1,1]$.    

So far we have shown that every rational number may be uniquely written
into a nonalternating form that has a primitive block decomposition. The
second half of the statement is a direct consequence of
Lemma~\ref{lem:evenden} and Remark~\ref{rem:convert}. 
\end{proof}

\begin{example}
  \label{ex:pblock2}
  {\em As seen in Example~\ref{ex:pblock}, the nonalternating simple continuous fraction expansion of $1402/1813$ is $[0, 1,
      3, 2, 2, 3, 5, 1, 3]$, which has 
    the primitive block decomposition $[0, 1, 3, 2, 2, 3; 5, 1, 3]$. The
    other nonalternating simple continued fraction decomposition is
    $[0, 1, 3, 2, 2, 3, 5, 1, 2, 1]$, which has no primitive block
    decomposition: the algorithm described in Remark~\ref{rem:upbdec}
    would yield the blocks $0$ and $1, 3, 2, 2, 3,$ and the block
    starting at $5$ is incomplete. 

 }   
\end{example}

\section{Transforming rational links into all-even form}
\label{sec:transf}

We encode an unoriented rational link diagram by a continued fraction
$p/q=[0,a_1,a_2,\ldots,a_n]$ whose evaluation has absolute value at most one
and satisfies $a_1\cdots a_n\neq 0$, where the numbers $|a_1|,\ldots, |a_n|$ are
the numbers of consecutive half-turn twists in the twistboxes $B_1,
\ldots, B_n$ following the sign convention as shown in Figure~\ref{fig:rationallink} (without the 
orientation of the strands). 
\begin{figure}[h]
\begin{center}
\input{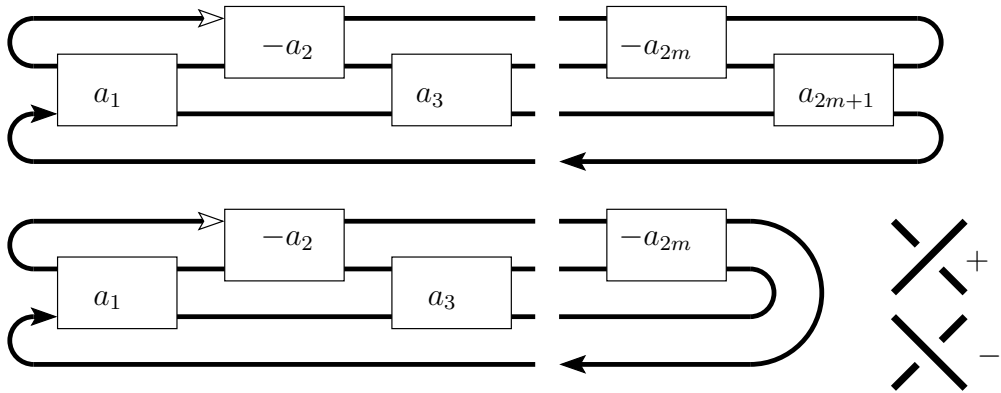}
\end{center}
\caption{Without the orientation: the sign convention used to define the standard form of non-oriented rational links; With orientation: preferred standard form of oriented rational links.}
\label{fig:rationallink}
\end{figure}
We denote the link presented in this form by the symbol $b(q,p)$ or by the vector $(a_1,a_2,\ldots,a_n)$, and call such a diagram a {\em standard diagram} of the unoriented rational link.
Notice that the left end of the diagram is fixed, the closing on the right end
depends on the parity of $n$. Our choice of twisting
signs agrees with~\cite{BZ, Cromwell}, but some articles use the mirror image~\cite{Du}. Since the braid index is an invariant of oriented links, we will need to consider the rational links with orientation. Once a rational link $b(q,p)$ is given an orientation, the crossings in it also have signs known as {\em crossing sign} in knot theory given by the convention shown in Figure \ref{crossingsign}, which is not to be confused with the signs of the half-turn twists in the twistboxes in Figure~\ref{fig:rationallink}. 
\begin{figure}[h] 
\begin{center}
\includegraphics[scale=0.5]{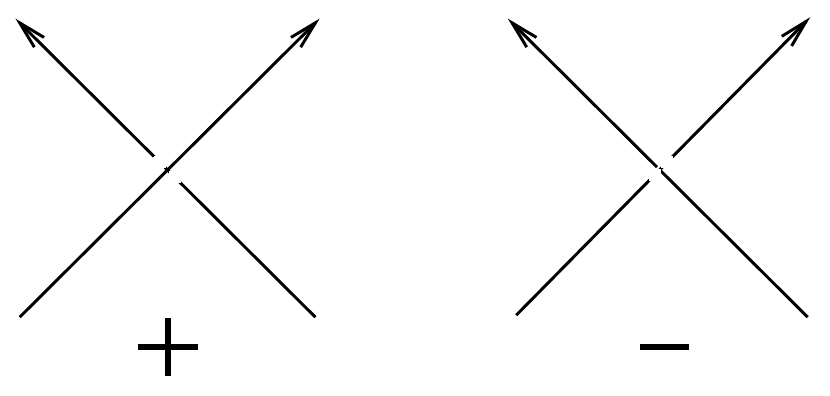}
\end{center}
\caption{The crossing sign convention at a crossing in an oriented link diagram.}
\label{crossingsign}
\end{figure}
In order to avoid this confusion, in the rest of this paper, we will
call the sign given in Figure~\ref{fig:rationallink} the {\em twist
  sign} and the sign given by Figure~\ref{crossingsign}  the  {\em
  crossing sign} (its usual name in knot theory). We will denote the
twist sign of the crossings in twistbox $B_i$ (corresponding to $a_i$ in
$b(q,p)=(a_0,a_1,...,a_n)$) by $\signt(a_i)$ and denote the crossing
sign of these crossings by $\signc(a_i)$. Note that the twist sign is
given by the formula
\begin{equation}
\signt(a_i)=(-1)^{i-1}\sign(a_i). 
\end{equation}
If there is a need to indicate the crossing sign and the twist sign of the crossings corresponding to an $a_i$ entry in a twistbox simultaneously, then we will indicate the twist sign by placing a $+$ or $-$ in front of the number $a_i$ and indicate the crossing sign by placing a superscript $+$ or $-$ to $a_i$. An example is given in Figure \ref{crossing_example}. Notice that in general, a rational link diagram in its standard form is not necessarily alternating. In fact, $b(q,p)=(a_0,a_1,...,a_n)$ is an alternating diagram if and only if the continued fraction expansion $[a_0,a_1,...,a_n]$ is in nonalternating denominator form. From this it follows that every rational link has an alternating link diagram, see~\cite{BZ, Cromwell}.

\begin{figure}[h] 
\begin{center}
\includegraphics[scale=0.8]{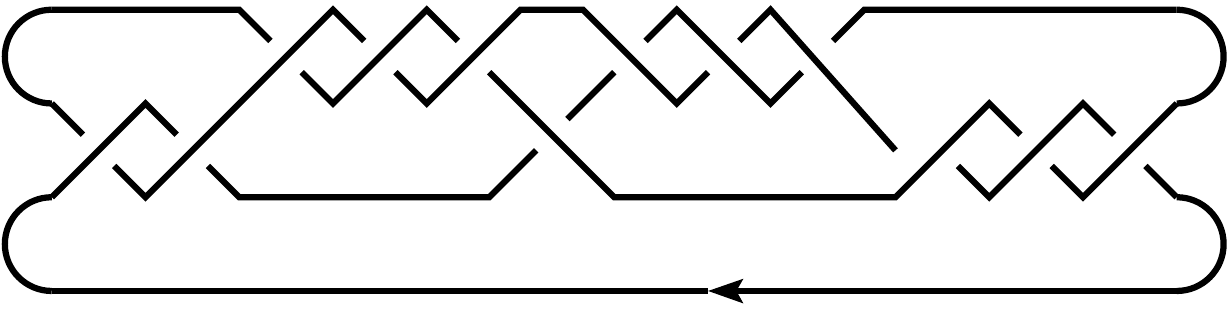}
\end{center}
\caption{The crossing sign convention for crossings in an oriented link diagram: the signs in front of the $a_j$'s indicate which strand is on top as indicated in Figure~\ref{fig:rationallink}, while the $+$ or $-$ in the superscript indicate the crossing sign given by Figure~\ref{crossingsign}. Shown is the example $[0,2^-,-3^+,-1^-,3^+,3^+]$.}
\label{crossing_example}
\end{figure}

Since we are interested in the braid index and the HOMFLY polynomial of
a rational link we need to consider rational links as oriented links. 
The classification of these links can be found in~\cite{BZ} and is due to Schubert~\cite{Schubert}.
\begin{theorem}
\label{classificaitonrationallinks}  $b(q,p)$ and $b(q^\p,p^\p)$ are equivalent as oriented links if and only if
$q=q^\p$ and $p^{\pm 1}\equiv p^\p \mod(2q)$, and are equivalent as unoriented links if and only if
$q=q^\p$ and $p^{\pm 1}\equiv p^\p \mod(q)$.
\end{theorem}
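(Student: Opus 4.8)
This is Schubert's classical classification, so the plan is to reduce it to the classification of rational tangles and then track the extra data needed for orientations. First I would identify the standard diagram $b(q,p)$ with the numerator closure of the rational tangle whose slope is the value $p/q=[0,a_1,\ldots,a_n]$ computed by the matrix product in Equation~(\ref{eq:mdef}); the twist vector $(a_1,\ldots,a_n)$ is exactly a sequence of horizontal and vertical twists building that tangle. The deep input is Conway's theorem that a rational tangle is determined up to isotopy (rel endpoints) by its slope, equivalently that the double branched cover of $b(q,p)$ is the lens space $L(q,p)$. I would take this as the engine of the proof. It immediately yields $q=q'$, since the order of the first homology of the double branched cover equals $q$ and is a topological invariant.

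It then remains to determine when two slopes $p/q$ and $p'/q$ give the same closure, i.e. to compute the symmetry group of the numerator-closure operation. In the unoriented category there are two sources of ambiguity. First, the closure only sees $p$ modulo $q$, so $p'\equiv p\pmod q$ leaves the link unchanged. Second, the rotation of the tangle by $\pi$ reverses the twist vector to $(a_n,\ldots,a_1)$; because each matrix $M(c)$ is symmetric, this transposes the product in Equation~(\ref{eq:mdef}), and combined with $\det M(c)=-1$ it sends $p$ to $p^{-1}\pmod q$. These two moves generate exactly the relation $p'\equiv p^{\pm1}\pmod q$, and conversely every such congruence is realized by a finite sequence of them; this settles the unoriented half. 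Here I would be careful to exclude the reflection sending $p\mapsto -p$, since that produces the mirror image, which is a genuinely different link under the present conventions.

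For the oriented statement I would fix orientations on the one or two strands and re-examine the two moves above. The key point is that the reduction ``modulo $q$'' silently used a reversal of the orientation of a single strand; once orientations are fixed that identification is no longer available, and the well-defined parameter becomes $p$ modulo $2q$ rather than modulo $q$, while the $\pi$-rotation still contributes the inversion $p\mapsto p^{-1}$. This upgrades the congruence to $p'\equiv p^{\pm1}\pmod{2q}$. I expect two places to require real care. The first is Conway's tangle classification itself, which is best cited and is most cleanly established through the double branched cover together with Reidemeister's orientation-preserving classification of lens spaces, namely $L(q,p)\cong L(q,p')$ if and only if $p'\equiv p^{\pm1}\pmod q$. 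The second, and the genuine obstacle, is the orientation bookkeeping separating the mod-$q$ and mod-$2q$ statements: one must check precisely how the admissible strand orientations interact with the rotation and with the strand-reversal hidden in the mod-$q$ move, and verify that reorienting a single strand shifts $p$ by a multiple of $q$ that is nonzero modulo $2q$.
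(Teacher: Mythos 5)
The paper does not prove this statement at all: it is quoted as Schubert's classical classification of two-bridge links, with the proof delegated to the references (Schubert's original paper and Burde--Zieschang). So there is no internal proof to compare against, and the only question is whether your sketch is a viable self-contained route. For the unoriented half it is, in outline: necessity via the double branched cover (which is $L(q,p)$, forcing $q=q^\p$ from $H_1$ and $p^\p\equiv p^{\pm1}\pmod q$ from the orientation-preserving classification of lens spaces, correctly excluding $p\mapsto -p$), and sufficiency via the two explicit moves (reduction mod $q$ and the $\pi$-rotation/palindrome move). Two details you wave at but would need to nail down: the palindrome move inverts the \emph{denominator} of the tangle fraction modulo the numerator with a sign $(-1)^{n\pm1}$ depending on the length of the continued fraction, so one must fix the parity of $n$ to land on $p^{-1}$ rather than $-p^{-1}$; and the reduction-mod-$q$ move should be exhibited as an actual isotopy of numerator closures (adding a full twist near the closure arcs), not just an identity of fractions.

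The genuine gap is in the oriented half. The double branched cover is blind to the orientation of the link, so the lens-space engine can never establish \emph{necessity} of the congruence modulo $2q$: it cannot distinguish $b(q,p)$ from $b(q,p+q)$ with their fixed orientations, which is exactly the distinction the theorem makes (compare the paper's own example $\tilde{b}(4,1)$ versus $\tilde{b}(4,-3)$, which have different linking numbers and even different braid indices). Your sketch correctly identifies this as ``the genuine obstacle'' but supplies no mechanism to overcome it: verifying that reorienting one component shifts $p$ by an odd multiple of $q$ only shows that the parametrization by residues mod $2q$ is consistent, not that the two resulting oriented links are inequivalent. To close this you need an honest oriented invariant or argument --- the linking number of the two components (computable from the continued fraction and sufficient in many but not all cases), the Seifert form or signature, or Schubert's original analysis of oriented two-bridge normal forms as carried out in Burde--Zieschang. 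Without naming and deploying such an invariant, the ``only if'' direction of the mod-$2q$ statement is unproved.
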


Rational links are invertible and therefore we will orient the
lowermost strand from the right to the left (as indicated by solid arrows)
without loss of generality. If the orientation of the top strand at left
side of the link diagram in its standard form is as shown in
Figure~\ref{fig:rationallink} (indicated by the hollow arrow), then we
say that the link diagram is in a {\em preferred standard form} and we will denote
by $\tilde{b}(q,p)$ the corresponding link $b(q,p)$ if we want to emphasize that $b(q,p)$ is given by diagram is in preferred standard form. It is important
to note that the braid index formula obtained in~\cite{Mu}
(Proposition~\ref{prop:CM}) is based on preferred standard forms of
rational links. As to the HOMFLY polynomial, the result of Lickorish and
Millett~\cite[Proposition 14]{Li-Mi} we use is stated in terms of the
Conway notation~\cite{Co}, but we rephrased it in
Proposition~\ref{prop:LM} below as a statement on link diagrams in
preferred standard form.    

Theorem \ref{classificaitonrationallinks} needs some more explanation for oriented links. In the case that $b(q,p)$ has two components, we have two choices for the orientation of the second component (namely the one that does not contain the bottom strand of the diagram), and one and only one such choice would produce a preferred standard diagram $\tilde{b}(q,p)$. 
By the procedure shown in Figure~\ref{fig:preferredform}, we can convert
any rational link (or knot) that is not in preferred standard form to a preferred standard form
representation: first we fold up the part shown with a dashed line in
the upper right corner and then we fold down the entire diagram,
applying a spatial $180$ degree rotation that changes the twist sign of all
crossings. In Figure~\ref{fig:preferredform} the continued fraction representing the original link
(disregarding the orientation) is $5/18=[0,3,1,1,2]$, after the
transformation we obtain the link represented by the continued fraction
$-13/18=[0,-1,-2,-1,-1,-2]$. The second continued fraction is obtained from
the first by replacing $3$, the first nonzero partial denominator, with
the sequence $1,2$ and then taking the negative of all partial
denominators. We note that $p=-5$ and $p'=-13$ do not satisfy $p^{\pm 1}\equiv p^\p \mod(2q)$.
We resolve this problem by assuming that in the case of oriented rational links we are always given the preferred standard form $\tilde{b}(q,p)$ by a vector $(a_1,a_2,\ldots,a_n)$. That is the link $\tilde{b}(18,5)$ given by $5/18=[0,3,1,1,2]$ must have the second component oriented differently than shown in Figure~\ref{fig:preferredform}.
Given $\tilde{b}(q,p)$ the other choice of orientation produces $\tilde{b}(q,-(q-p))$, which is usually a different link type than $\tilde{b}(q,p)$ by Theorem~\ref{classificaitonrationallinks}.  In the example of Figure~\ref{fig:preferredform} $\tilde{b}(q,-(q-p))=\tilde{b}(18,-13)$ is shown on the bottom right.

This observation may be generalized as follows.
\begin{lemma}
\label{lem:flip}  
If $p/q=[0,c_1,c_2,\ldots,c_n]$ satisfies $|c_1|>1$ and $c_1\cdots c_n
\neq 0$
then
$1-p/q=(q-p)/q=[0,\sign(c_1),c_1-\sign(c_1),c_2,\ldots,c_n]$.  
Moreover if $\tilde{b}(q,p)$ is given by the vector $(c_1,c_2,\ldots,c_n)$ then the vector $(\sign(c_1),c_1-\sign(c_1),c_2,\ldots,c_n)$ gives the mirror image of $\tilde{b}(q,-(q-p))$.
\end{lemma}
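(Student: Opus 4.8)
The plan is to treat the two assertions separately: first the arithmetic continued-fraction identity, then the link-theoretic ``moreover'' statement. Throughout I write $\epsilon=\sign(c_1)$, and I note that $|c_1|>1$ guarantees $c_1-\epsilon\neq0$, so that every continued fraction produced below is again nonsingular.

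First I would prove the displayed equation using the matrix representation of Equation~(\ref{eq:mdef}). Setting $\binom{p'}{q'}=M(c_2)\cdots M(c_n)\binom10$, we have $\binom pq=M(0)M(c_1)\binom{p'}{q'}$, and the whole identity reduces to a single $2\times2$ computation. Using $\epsilon^2=1$ one finds
$$M(0)M(\epsilon)M(c_1-\epsilon)=\begin{pmatrix}c_1-\epsilon & 1\\ \epsilon c_1 & \epsilon\end{pmatrix},$$
which for $\epsilon=1$ factors as $\left(\begin{smallmatrix}-1&1\\0&1\end{smallmatrix}\right)M(0)M(c_1)$. Multiplying on the right by $\binom{p'}{q'}$ then gives $\binom{\bar p}{\bar q}=\left(\begin{smallmatrix}-1&1\\0&1\end{smallmatrix}\right)\binom pq=\binom{q-p}{q}$, i.e. $[0,1,c_1-1,c_2,\dots,c_n]=(q-p)/q=1-p/q$, exactly as stated.

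Next I would establish the ``moreover'' part. By the identity following Lemma~\ref{lem:nonaltu}, negating every entry of a vector negates its evaluated fraction, and geometrically this is passage to the mirror image; hence the mirror image of $\tilde b(q,-(q-p))$ is $\tilde b(q,q-p)$, whose fraction is $(q-p)/q$. By the previous paragraph the transformed vector $(\sign(c_1),c_1-\sign(c_1),c_2,\dots,c_n)$ evaluates, when $c_1>0$, precisely to $(q-p)/q$, so it presents $\tilde b(q,q-p)$, the desired mirror image. I would round this out by checking that the transformed vector is again a genuine preferred standard form and carries the claimed orientation, using invertibility of rational links and the two-component orientation discussion preceding the lemma.

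The hard part is the case $\sign(c_1)=-1$. Here the same matrix computation yields the factorization $\left(\begin{smallmatrix}1&1\\0&-1\end{smallmatrix}\right)M(0)M(c_1)$, so $\binom{\bar p}{\bar q}=\left(\begin{smallmatrix}1&1\\0&-1\end{smallmatrix}\right)\binom pq=\binom{q+p}{-q}$, and the transformed vector evaluates to $-(q+p)/q$, which is a genuinely different rational number from $(q-p)/q$. The way I would resolve this is to pass from bare fractions to oriented link types via Schubert's classification (Theorem~\ref{classificaitonrationallinks}): since
$$-(q+p)\equiv q-p \pmod{2q},$$
the link $\tilde b(q,-(q+p))$ presented by the transformed vector is equivalent as an oriented link to $\tilde b(q,q-p)$, the mirror image of $\tilde b(q,-(q-p))$. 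Thus the ``moreover'' assertion holds for both signs of $c_1$, even though the clean value $(q-p)/q$ in the first display is achieved on the nose only when $c_1>0$. I expect this careful distinction between the literal continued-fraction value and the oriented link type it represents to be the only delicate point; everything else is the routine $2\times2$ matrix algebra above.
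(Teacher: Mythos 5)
Your argument is correct, and it is worth noting at the outset that the paper gives no proof of this lemma at all: it is presented as the ``generalization'' of the geometric folding procedure illustrated in Figure~\ref{fig:preferredform} (reverse the orientation of the top component, fold up the right end, rotate the diagram by $180$ degrees, which flips every twist sign), so your purely algebraic matrix route is genuinely different from what the paper relies on. Your $2\times 2$ identity $M(0)M(\epsilon)M(c_1-\epsilon)=\left(\begin{smallmatrix}c_1-\epsilon&1\\ \epsilon c_1&\epsilon\end{smallmatrix}\right)$ and the two factorizations check out, and your observation about $\sign(c_1)=-1$ is a real catch: for $c_1<0$ the transformed continued fraction evaluates to $-(p+q)/q$, not $(q-p)/q$ (for instance $p/q=[0,-2]=-1/2$ gives $[0,-1,-1]=-1/2\neq 3/2=1-p/q$), so the first display of the lemma is literally true only when $c_1>0$, and for $c_1<0$ it survives only at the level of oriented link types via $-(p+q)\equiv q-p\pmod{2q}$ and Theorem~\ref{classificaitonrationallinks}, exactly as you argue.

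The one place where your proof is thinner than the statement demands is the ``moreover'' clause. That clause is an assertion about a specific diagram --- that the vector $(\sign(c_1),c_1-\sign(c_1),c_2,\ldots,c_n)$, with its induced orientation, \emph{is} the mirror image of the preferred standard diagram $\tilde{b}(q,-(q-p))$ --- and the paper uses it in exactly that diagrammatic sense: the intended content is that reversing the orientation of the top component of $(c_1,\ldots,c_n)$ and applying the folding of Figure~\ref{fig:preferredform} yields the preferred standard diagram $(-\sign(c_1),-(c_1-\sign(c_1)),-c_2,\ldots,-c_n)$, whose entrywise negation (i.e., mirror image) is the asserted vector. You defer this step (``I would round this out by checking\ldots'') and instead identify links up to Schubert equivalence, which pins down the oriented link type but not that the new vector is itself in (the mirror of) preferred standard form. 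Since the later results (e.g., Theorem~\ref{thm:braid}) may only be applied to diagrams in preferred standard form, that deferred verification is not optional; it is precisely the geometric argument the paper sketches in the paragraph preceding the lemma, and your writeup should carry it out rather than promise it.
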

Note that the operation $p/q\mapsto 1-p/q$ is an involution and that the
involution described in Lemma~\ref{lem:flip} above is somewhat similar to
the involution described in Lemma~\ref{lem:nonaltu}. 

\begin{figure}[h] 
\begin{center}
\includegraphics[scale=0.6]{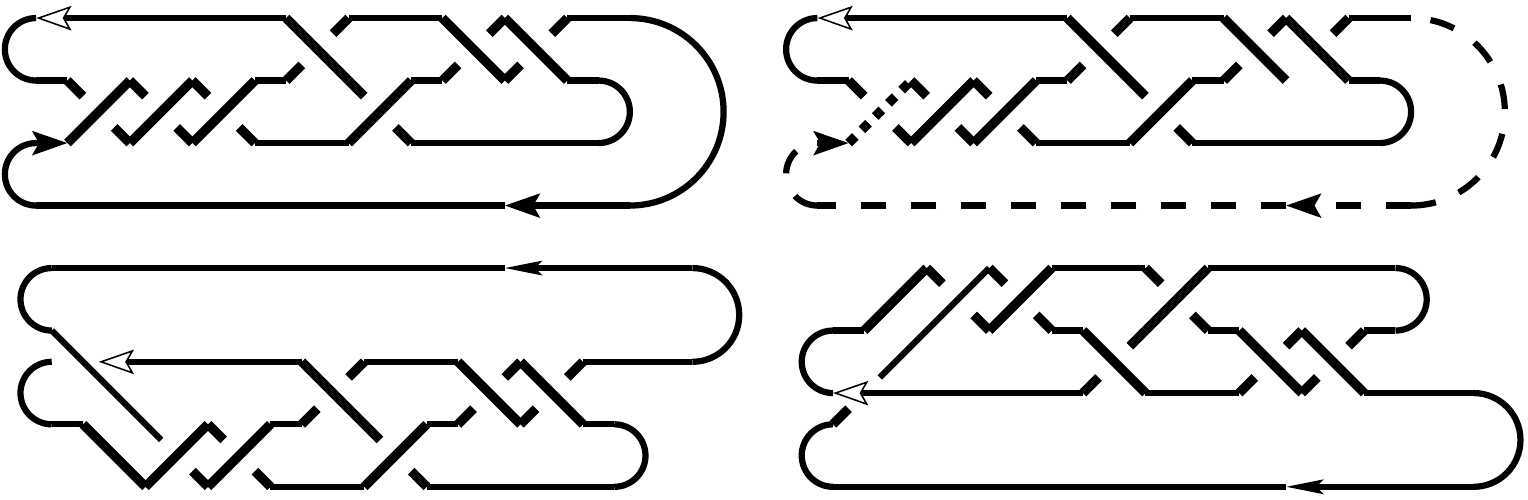}
\end{center}
\caption{The procedure of changing a two component rational link in a non-preferred standard form to a preferred standard form.}
\label{fig:preferredform}
\end{figure}

Define the sign of a twistbox $B_i$ as the crossing sign of all
crossings in it and denote it (by abuse of notation) by $\signc(B_i)$. Under the assumption that $b(q,p)$ is in a preferred standard form and that $a_i>0$ for all $i$ (hence $\signc(B_1)=+1$), we note that the signs of adjacent twistboxes are related by the automaton shown in Figure~\ref{fig:automaton}. (If $a_i>0$ but $\signc(B_1)=-1$ then the diagram is not in a preferred standard form and we would need to change to $\tilde{b}(q,q-p)$.) Inspecting the signs of the crossings in the states of the automaton, we obtain the following result.

\begin{figure}[h] 
\begin{center}
\input{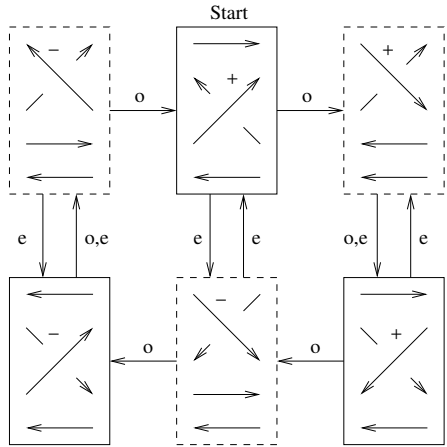}
\end{center}
\caption{Automaton, parsing the signs of crossings in an alternating
  link. $o$ and $e$ stand for an odd or even number of crossings in a twistbox respectively. }
\label{fig:automaton}
\end{figure}

\begin{theorem}
\label{thm:blockdec}  
Suppose $\tilde{b}(q,p)$ is
represented by a nonalternating 
continued fraction $p/q=[0,a_1,$ $\ldots,a_n]$
that has a primitive block decomposition with no exceptional primitive
block. Then twistboxes associated with the same primitive block have the same sign, while the twistboxes  associated with adjacent primitive blocks have opposite signs.
\end{theorem}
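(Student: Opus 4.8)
The plan is to prove an equivalent statement about a single sign attached to each twistbox. Taking the mirror image of the diagram negates every crossing sign simultaneously and hence preserves the asserted pattern, while negating all partial denominators replaces $p/q$ by $-p/q$; we may therefore assume $a_1,\dots ,a_n>0$, so that $\signt(a_i)=(-1)^{i-1}$ alternates from box to box. For an oriented diagram the crossing sign of $B_i$ factors as $\signc(B_i)=\signt(a_i)\cdot o_i$, where $o_i\in\{-1,+1\}$ records whether the two strands running through $B_i$ are counter-directed or co-directed; this orientation type is exactly the datum tracked by the states of the automaton in Figure~\ref{fig:automaton}. Because $\signt(a_i)$ reverses at every step, the theorem is equivalent to the single assertion that $o_i$ reverses at each step taken \emph{inside} a primitive block and is preserved at each step that crosses from one primitive block into the next.

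The behaviour of $o_i$ is precisely what the automaton computes from the parity sequence of the twistboxes, so first I would read off its transition rule. The automaton processes $a_1,a_2,\dots$ from left to right, each box contributing the label $o$ or $e$ according to whether $|a_i|$ is odd or even, and its states carry the current value of $\signc$ together with the phase of the primitive-block parsing of Remark~\ref{rem:upbdec}. The phase is genuinely needed, because one checks on small examples that the transition of $o_i$ is not a function of the box parities alone: an even box opening a trivial block behaves oppositely to an even box sitting at an interior even offset of a nontrivial block. I would therefore trace the automaton over the full parity pattern of one primitive block and verify that the $\signc$-label of its states is constant along the block and changes exactly once upon exit.

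Carrying this out, a nontrivial block has the parity template odd, free, even, free, $\dots$, even, free, odd, where the endpoints are odd and the interior even-offset entries are even by Proposition~\ref{prop:pblock}, while the odd-offset entries are unconstrained. I would show that along this template the orientation type reverses at each of the $2k$ internal steps, so that $o_i$ and $\signt(a_i)$ reverse together and their product $\signc(B_i)$ stays fixed; a trivial even singleton contributes one state; and at every junction—the closing odd entry of a nontrivial block followed by the opening entry of the next, or the passage into or out of an even singleton—the orientation type is instead preserved while $\signt$ still reverses, so that $\signc$ flips exactly once between the two blocks. The hypothesis of no exceptional block, equivalently $pq$ even by Theorem~\ref{thm:noepb}, guarantees that every block has a genuine closing entry, so the parser never stalls and each block-to-block transition is of the junction type just described.

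The main obstacle is this second ingredient: proving that $o_i$ reverses at every internal step while being preserved at each junction, uniformly over the unconstrained odd-offset entries, whose parity may be either $o$ or $e$. The resolution is that the transition of $o_i$ is governed not by the parity of a single box but by that parity together with the horizontal–vertical alternation of the twist regions, and the primitive-block template—odd endpoints, even interior even-offsets—is exactly what forces the net orientation change to be a reversal at each internal step and a preservation at each junction. Verifying this bookkeeping against the states of Figure~\ref{fig:automaton}, or equivalently computing the co/counter-directedness directly from the orientation of the lowermost strand, is the one genuinely diagram-theoretic step; the rest is the purely combinatorial consequence recorded in the equivalence of the first paragraph.
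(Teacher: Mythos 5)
Your proposal is essentially the paper's own argument: both reduce to the case $a_i>0$, trace the automaton of Figure~\ref{fig:automaton} through the parity template of a primitive block (odd endpoints, even interior even-offset entries, unconstrained odd-offset entries), and conclude by induction on the number of blocks, with the ``no exceptional block'' hypothesis guaranteeing each block closes. Your factorization $\signc(B_i)=\signt(a_i)\cdot o_i$ is a cosmetic repackaging of the automaton's state information, and the ``bookkeeping'' you defer is precisely the explicit case analysis (even singleton versus odd block-opener, oscillation between the two same-sign states, exit at the closing odd entry) that constitutes the paper's proof.
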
  
\begin{proof}
Let the diagram $D$ be given by the vector $(a_1,\ldots,a_n)$.
(i) Let $(a_1,\ldots,a_{1+2k})$ be the first primitive block.
Without loss of generality we may assume 
$a_i>0$ for all $i$, the case $a_i<0$ for all $i$ is completely
analogous. This means that $B_1$ is involving the middle two strings of the preferred standard diagram as shown in top middle of Figure~\ref{fig:automaton}.
If $a_1$ is even then it is the only element of its primitive block ($k=0$) and $\signc(B_{2})=-1$. The partial
denominator $a_{2}$ corresponds to the bottom center state of Figure~\ref{fig:automaton} and therefore $\signc(a_2)=-1$. If $a_1$ is
odd, then it is the first element of a nonsingleton primitive block. We have
$\signc(B_{2})=1$, $a_2$ corresponds to the top right state in the
automaton and $\signc(a_2)=1$. The subsequent partial denominators correspond to twistboxes
filled with positive crossings, as long as we oscillate between the top
right and the bottom right states of Figure~\ref{fig:automaton}, and we reach the first negative
state exactly when we leave the bottom right state moving left. This
corresponds exactly to finishing the primitive block containing $a_{1+2k}$ with $a_{1+2k})$ odd and $\signc(B_{2+2k})=-1$ corresponds to the bottom center state of Figure~\ref{fig:automaton}.

(ii) Let $(a_m,\ldots,a_{m+2k})$ be the second primitive block starting at the bottom center state of Figure~\ref{fig:automaton}.
If $a_m$ is even then it is the only element of its primitive block ($k=0$) and $\signc(B_{m+1})=1$. The partial
denominator $a_{m+1}$ corresponds to the top center state of Figure~\ref{fig:automaton}. If $a_m$ is
odd, then it is the first element of a nonsingleton primitive block, we have
$\signc(B_{m+1})=-1$ and $a_{m+1}$ corresponds to the bottom left state in the
automaton. The subsequent partial denominators will all label twistboxes
filled with negative crossings, as long as we oscillate between the bottom
left and the top left states of Figure~\ref{fig:automaton}, and we reach the first positive
state exactly when we leave the top left state moving right. This
corresponds exactly to finishing the primitive block containing $a_{m+2k}$ with $a_{m+2k})$ odd and $\signc(B_{m+2k+1})=1$ corresponds to the top center state of Figure~\ref{fig:automaton}.

The theorem follows by induction on the number of blocks. 
\end{proof}  

\begin{theorem}
\label{thm:blocktrans}  
Suppose $\tilde{b}(q,p)$ is
represented by a nonalternating 
continued fraction $p/q=[0,a_1,\ldots,$ $a_n]$
that has a primitive block decomposition with no exceptional primitive
block. For each $i\in\{1,\ldots,n\}$ define $\tau_(i)$ by
$$
\tau(i)=i-1-\mid\{j\leq i \::\: \signc(a_j)\neq \signc(a_{j-1})\mbox{ or $j=1$
  and $a_1$ is even}\}\mid+\sum_{{j<i \atop
    \signc(a_j)=\signc(a_{j-1})=(-1)^j}} (a_j-2).
$$
Then the all-even continued fraction representation of $p/q$ may be
obtained by replacing each $a_i$ as follows.
\begin{enumerate}
\item If $a_1$ is even, keep it, if it is odd, replace it with
  $a_1+\sign(a_1)$.   
\item If $\signc(a_i)\neq \signc(a_{i-1})$ then replace $a_{i}$ with
  $(-1)^{\tau(i)}\cdot a_i$ if $a_i$ is even and with $(-1)^{\tau(i)}\cdot
  (a_i+\sign(a_1))$ if $a_i$ is odd.
\item If $\signc(a_i)=\signc(a_{i-1})=(-1)^{i-1}\cdot \sign(a_1)$ then
  replace $a_i $ with $(-1)^{\tau(i)}\cdot
  (a_i+\sign(a_1))$ if $a_i$ is odd and with $(-1)^{\tau(i)}\cdot
  (a_i+2\cdot \sign(a_1))$ if $a_i$ is even.
\item If $\signc(a_i)=\signc(a_{i-1})=(-1)^{i}\cdot \sign(a_1)$ then  
  replace $a_i $ with the sequence
$$(-1)^{\tau(i)}\cdot \sign(a_1) \cdot 2,
(-1)^{\tau(i)}\cdot \sign(a_1) \cdot (-2),\ldots (-1)^{\tau(i)}\cdot \sign(a_1) \cdot
(-1)^{|a_i|-2}\cdot 2$$ of length $|a_i|-1$. 
\end{enumerate}
\end{theorem}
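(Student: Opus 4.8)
The plan is to show that Theorem~\ref{thm:blocktrans} is simply a careful ``translation dictionary'' that rewrites the block-by-block conversion of Proposition~\ref{prop:pblock} (applied along the lines of Remark~\ref{rem:convert}) into a single formula indexed by the crossing signs rather than by the twist signs. The key conceptual input is Theorem~\ref{thm:blockdec}, which tells us that the crossing signs $\signc(a_i)$ encode exactly the primitive block structure: the crossing sign is constant within a block and flips between adjacent blocks. So the first step is to use Theorem~\ref{thm:blockdec} to identify, purely from the data $\signc(a_i)$, where each primitive block begins and ends. Concretely, a position $i$ with $\signc(a_i)\neq\signc(a_{i-1})$ (together with the boundary convention ``$j=1$ and $a_1$ even'') marks the left endpoint of a new block, and within a block the entries alternate between the two ``parity classes'' recorded by the exponents $(-1)^{i-1}\sign(a_1)$ and $(-1)^i\sign(a_1)$.

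The second step is to match the four cases in the statement against the operations prescribed in Proposition~\ref{prop:pblock}. Case (1) is the normalization of $a_1$ (turning an odd leading denominator into an even one by adding $\sign(a_1)$), matching step~(1) of Proposition~\ref{prop:pblock} for the first block. Case (2), covering block boundaries, combines the right-endpoint adjustment $c_{m+2k}\mapsto c_{m+2k}+\delta$ of one block with the left-endpoint adjustment $c_m\mapsto c_m+\delta$ of the next, which together with the sign $\sign(a_1)$ accounts for why an odd $a_i$ at a boundary gets $a_i+\sign(a_1)$ while an even $a_i$ is merely re-signed. Case (3) is the interior-even adjustment $c_{m+2i}\mapsto c_{m+2i}+2\delta$ of step~(2) of Proposition~\ref{prop:pblock} (these are precisely the even interior entries, sitting on the parity class $(-1)^{i-1}\sign(a_1)$). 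Case (4) is the replacement of an odd interior entry $c_{m+2i-1}$ by $|c_{m+2i-1}|-1$ copies of $\pm2$ from step~(3), sitting on the complementary parity class $(-1)^i\sign(a_1)$.

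The third step is to verify that the global sign factor $(-1)^{\tau(i)}$ correctly bookkeeps the sign propagation of step~(5) of Proposition~\ref{prop:pblock} applied cumulatively over all earlier blocks. Here the plan is to check that $\tau(i)$ counts exactly the right exponent: the term $i-1$ counts positions, the subtracted cardinality removes one unit per block boundary at or before $i$ (the ``shrinkage'' caused by the odd-to-$2$'s substitutions and by the merging at boundaries), and the summation $\sum(a_j-2)$ over earlier interior-odd entries with $\signc(a_j)=\signc(a_{j-1})=(-1)^j$ records the extra length contributed by each expanded block. Since each application of Proposition~\ref{prop:pblock} to a block multiplies all later entries by $(-1)^{|c_{m+1}|+\cdots+|c_{m+2k-1}|}$, the accumulated sign on $a_i$ is $(-1)$ raised to the sum of the odd-interior absolute values over all preceding blocks; the claim is that this exponent is congruent mod~$2$ to $\tau(i)$, and this congruence is what the three pieces of $\tau$ are designed to produce.

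The main obstacle I expect is precisely this last bookkeeping: proving the congruence $\tau(i)\equiv \sum_{\text{earlier blocks}}\big(|c_{m+1}|+\cdots+|c_{m+2k-1}|\big)\pmod 2$ together with the local sign coming from the alternation inside the current block. The cleanest route is induction on the number of blocks, maintaining as invariant that after processing the first $r$ blocks every not-yet-processed entry $a_i$ carries the global sign $(-1)^{\tau(i)}$ predicted by the formula; the base case $r=0$ (the leading entry $a_1$) is handled by Case~(1), and the inductive step applies Proposition~\ref{prop:pblock} to block $r+1$ and checks that $\tau(i)$ increments consistently both for the re-signing of later entries and for the internal alternating signs $\sign(a_1)\cdot2,\;\sign(a_1)\cdot(-2),\ldots$ inserted by Case~(4). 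I would reduce the parity arithmetic to the observation that within one block the number of odd interior entries equals the number of block-internal sign reversals, so the internal alternation in Case~(4) and the block-boundary subtraction in $\tau$ fit together exactly.
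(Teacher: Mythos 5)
Your overall strategy --- read the block structure off the crossing signs via Theorem~\ref{thm:blockdec}, match the four cases to the operations of Proposition~\ref{prop:pblock}, and verify that $(-1)^{\tau(i)}$ encodes the accumulated sign propagation together with the alternation inside the current block --- is exactly the route the paper takes (the paper phrases the first step in terms of the automaton states of Figure~\ref{fig:automaton}, which carry the same information). However, two of your concrete steps fail as written. First, your dictionary misassigns the right-endpoint adjustment $c_{m+2k}\mapsto c_{m+2k}+\delta$: the condition $\signc(a_i)\neq\signc(a_{i-1})$ in case (2) identifies $a_i$ as the \emph{first} entry of a new block, so the odd subcase of (2) is purely the left-endpoint adjustment of a new nontrivial block (and the even subcase is a trivial singleton block). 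The last entry of a nontrivial block has the \emph{same} crossing sign as its predecessor and sits at an even position (counting from $0$) inside its block, so it falls under case (3); the odd subcase of (3) is the right-endpoint adjustment and the even subcase is the interior adjustment $+2\delta$. Carried out literally, your matching would not reproduce the statement's cases (2) and (3).

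Second, the ``observation'' to which you plan to reduce the parity arithmetic is false: in the transformed image of a block $(c_m,\ldots,c_{m+2k})$ the signs strictly alternate over all $1+\sum_{i=1}^{k}|c_{m+2i-1}|$ entries, so the number of block-internal sign reversals is $\sum_{i=1}^{k}|c_{m+2i-1}|$, which is neither equal nor congruent mod $2$ to the number $k$ of odd-position interior entries (the block $(3,4,3)$ gives $k=1$ versus $4$ reversals). The correct bookkeeping --- which you in fact state earlier in the same paragraph --- is that $\tau(i)$ is the number of transformed entries preceding the first entry produced from $a_i$ (namely $i-1$ plus the length corrections $\sum(a_j-2)$ over earlier case-(4) entries), minus the number of positions at which the transformed sequence fails to alternate; those failures occur exactly at block boundaries, which is what the subtracted cardinality counts (no sign change occurs between consecutive transformed blocks, as noted in the proof of Theorem~\ref{thm:CMpblock}), not any ``shrinkage''. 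Your induction on the number of blocks goes through once these two points are repaired.
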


\begin{example}{\em
Using the example of Proposition~\ref {prop:pblock} we have the link 
$\tilde{b}(49654, 34651)$ given by 
$ [0,+1^+, +2^+, +3^+, +4^-, +2^+, +1^-, +6^-, +3^-, +5^+, +3^+]$.
Applying the procedure given by Theorem~\ref{thm:blocktrans}  we have $\sign(a_1)=+1$, $\tau(2)=1,\tau(3)=\tau(4)=\tau(5)=\tau(6)=2, \tau(7)=3, \tau(8)=8, \tau(9)=9, \tau(10)=9$. This will result in the even denominator form
$[0, 2, -2, 4, 4, 2, 2, -2, 2, -2, 2, -2, 4, 6, -2, $ $2]$.}
\end{example}

\begin{proof}
The states of the automaton shown in Figure~\ref{fig:automaton} indicate
the position of each $a_i$ in its primitive block: the two middle states
correspond to a last element of a primitive block, the upper right and
lower left states correspond to $a_i$ being in an odd (but not last)
position of a block of odd length, the lower right and upper left states
correspond to $a_i$ being in an even position in a block of odd
length. (We associate to each $a_i$ the state reached {\em after}
reading $a_i$.) These states are also identifiable if we know the crossing
signs in the twistboxes associated to $a_{i-1}$ and $a_i$ and we also
know the parity of $a_i$. The stated rules then follow from
Proposition~\ref{prop:pblock}, after verifying that
$(-1)^{\tau(i)}\sign(a_1)$ is the correct sign of the number (or first
number in the sequence) obtained by transforming $a_i$. Indeed, the
transformed numbers must be alternating in sign, except when the
crossing sign of some $a_j$ is different from the crossing sign of
$a_{j-1}$: in this case the first number obtained by transforming
$a_{j+1}$ must have the same sign as the last number obtained by
transforming $a_j$.   

\end{proof}

\section{The braid index of rational links}
\label{sec:braid}

In this section we show how Theorem~\ref{thm:blockdec} may be used to
derive the formula for the braid index of an alternating rational link,
given in~\cite{DL}, using the result of Murasugi~\cite[Proposition
  10.4.3]{Cromwell} which allows one to compute the braid index of an
oriented rational link $K$ that has a preferred standard form. 

\begin{definition}
  \label{def:CM}
Let $p/q$ be a rational number such that $pq$ is even. Assume its unique
even denominator form continued fraction expansion is
$[2d_0,2d_1,\ldots,2d_n]$. We define the {\em Cromwell-Murasugi
  index}  of $p/q$ to be the number
$\sum_{i=0}^n |d_i| -t+1$ where $t$ is the number of indices $i$ such
that $d_i d_{i+1}<0$. 
\end{definition}  

In terms of the above definition, Murasugi's result~\cite[Proposition
  10.4.3]{Cromwell} may be stated as follows. 

\begin{proposition}
\label{prop:CM}  
If an oriented rational link $K$ has a preferred standard form
$\tilde{b}(q,p)$, then its braid index is given by the Cromwell-Murasugi
index of $p/q$, as defined in Definition~\ref{def:CM}. 
\end{proposition}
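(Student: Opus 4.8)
The plan is to treat Proposition~\ref{prop:CM} as a restatement, in the language of Definition~\ref{def:CM}, of Murasugi's braid-index theorem recorded in~\cite[Proposition 10.4.3]{Cromwell}; accordingly the task is not to reprove the braid index from scratch but to verify that the Cromwell--Murasugi index $\sum_{i=0}^n|d_i|-t+1$ extracted from the even denominator form $[2d_0,\ldots,2d_n]$ is exactly the number Murasugi's formula returns. The first step is therefore to transcribe the statement of~\cite[Proposition 10.4.3]{Cromwell} into the present notation, replacing the continued-fraction (or Conway) convention used there by the even denominator form, whose existence and uniqueness under the standing hypothesis that $pq$ is even are guaranteed by Lemma~\ref{lem:evenden}, and then to match the two expressions term by term.

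The geometric content behind the formula, which I would summarize rather than reprove, rests on the fact that every partial denominator $2d_i$ is even, so that the two strands threading each twistbox are counter-directed, exactly as in the Duzhin--Shkolnikov observation quoted in the introduction. For such an anti-parallel diagram the number of Seifert circles of the preferred standard diagram $D$ can be read off the sequence $(d_i)$: each box $B_i$ contributes $|d_i|$ of them, two adjacent boxes with $d_id_{i+1}>0$ glue their circles additively, each sign change $d_id_{i+1}<0$ identifies two circles into one, and the closure supplies one further circle. Tallying these contributions yields $s(D)=\sum_{i=0}^n|d_i|-t+1$, and I would confirm this tally on the two governing local configurations (equal versus opposite sign of consecutive $d_i$) and then conclude by induction on $n$. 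This is also where one sees why the index is unchanged under the replacement $[2d_0,\ldots,2d_n]\mapsto$ its normalized form, so that $s(D)$ depends only on $p/q$.

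Finally I would upgrade $s(D)$ to the braid index by the standard squeeze. Yamada's inequality~\cite{Ya} gives $\b(K)\le s(D)$, while the Morton--Franks--Williams inequality~\cite{FW,Mo} gives $\b(K)\ge (E(D)-e(D))/2+1$, and it suffices to check that for the all-even diagram these two bounds coincide. The step I expect to be the main obstacle is precisely this coincidence, namely $(E(D)-e(D))/2+1=s(D)$: it requires controlling the extreme $a$-degrees of the HOMFLY polynomial of $D$, and it is the counter-directed structure of the twistboxes that makes these extreme degrees computable and forces both inequalities to be tight. Since~\cite{Mu} and~\cite[Proposition 10.4.3]{Cromwell} already carry out this HOMFLY computation, I would cite it rather than redo it, restricting the verification needed here to the purely arithmetic identification of the Cromwell--Murasugi index with Murasugi's quantity.
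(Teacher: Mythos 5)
Your overall approach matches the paper's: the paper offers no proof of Proposition~\ref{prop:CM} at all, presenting it purely as a restatement of Murasugi's theorem as recorded in \cite[Proposition 10.4.3]{Cromwell}, so citing that result and verifying the purely notational identification with Definition~\ref{def:CM} is exactly what is intended. One caveat on your geometric summary: the raw all-even preferred standard diagram does \emph{not} have $s(D)=\sum_i|d_i|-t+1$ Seifert circles --- an anti-parallel twistbox with $2|d_i|$ crossings produces $2|d_i|-1$ bigon circles, so the count $\sum_i|d_i|-t+1$ is only attained after reduction moves (this is visible already for $\tilde{b}(4,1)$, whose standard diagram has $4$ Seifert circles but braid index $3$); since you defer the actual argument to the citation this does not invalidate the proposal, but the tally as you describe it would not check out if carried through literally.
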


The next theorem shows how to compute the braid index of a rational link
$K$ that has a preferred standard form, using the unique continued fraction
expansion that corresponds to a an alternating link and has a primitive
block decomposition. 

\begin{theorem}
\label{thm:CMpblock}
  Suppose $p/q$ is a rational number such that $pq$ is even. Let
$[c_0,\ldots,c_n]$  be the unique nonalternating continued fraction
expansion of $p/q$ that has a primitive block decomposition. Then the
Cromwell-Murasugi index of $p/q$ may be computed by adding $1$ to the
sum of all $|c_i|/2$ such that $c_i$ is in an odd position in the
primitive block containing it.  
\end{theorem}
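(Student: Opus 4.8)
The plan is to connect two different formulas for the same quantity (the Cromwell--Murasugi index) by tracking how the all-even form is produced from the nonalternating primitive-block form via Proposition~\ref{prop:pblock}. The Cromwell--Murasugi index is defined in Definition~\ref{def:CM} as $\sum_{i=0}^n|d_i|-t+1$, where the even denominator form is $[2d_0,\ldots,2d_n]$ and $t$ counts the sign changes $d_id_{i+1}<0$. So the heart of the matter is to show that
\begin{equation*}
\sum_{i=0}^n |d_i| - t = \sum_{\substack{c_i \text{ in odd position}\\ \text{in its primitive block}}} \frac{|c_i|}{2}.
\end{equation*}
First I would invoke Theorem~\ref{thm:noepb}: since $pq$ is even, the unique nonalternating form with a primitive block decomposition has no exceptional trivial block, so every partial denominator lies in a genuine primitive block of the type described in Proposition~\ref{prop:pblock}, and Remark~\ref{rem:convert} guarantees that applying Proposition~\ref{prop:pblock} block-by-block does yield the all-even form.

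The key step is a careful bookkeeping of the right-hand side of Proposition~\ref{prop:pblock}, computed blockwise. Within a nontrivial primitive block $(c_m,\ldots,c_{m+2k})$, the even-position entries $c_{m+1},c_{m+3},\ldots,c_{m+2k-1}$ (the odd positions within the block, counting from the first entry as position $1$) are each replaced by $|c_{m+2i-1}|-1$ copies of $\pm 2$ that strictly alternate in sign, whereas the odd-position entries $c_m,c_{m+2},\ldots,c_{m+2k}$ are replaced by single even values (after adding $\pm 1$ or $\pm 2$). My plan is to show that inside the all-even form, each such alternating run of $|c_{m+2i-1}|-1$ twos contributes exactly $|c_{m+2i-1}|-1$ to $\sum|d_i|$ but also introduces $|c_{m+2i-1}|-2$ internal sign changes to $t$, so that its net contribution to $\sum|d_i|-t$ is $(|c_{m+2i-1}|-1)-(|c_{m+2i-1}|-2)=1$ plus the boundary sign-change bookkeeping. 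Meanwhile the single even entries produced from $c_m,c_{m+2k}$ (odd in the original) and from the interior $c_{m+2i}$ all have, after division by $2$, absolute value $1$; each contributes $1$ to $\sum|d_i|$. The claim is that after cancelling the sign changes against the $|d_i|$ contributions of the $2$-runs, precisely the entries in odd positions of the block survive, each contributing $|c_i|/2$, and the remaining even-position entries contribute nothing to the net sum. One must also account for the single sign change that occurs at each boundary between adjacent primitive blocks, which Theorem~\ref{thm:blockdec} tells us corresponds to a twistbox-sign reversal; these boundary sign changes are exactly the ones not internal to a $2$-run and should cancel the extra ``$+1$'' contributions from the run-collapse, leaving the clean formula above.

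I expect the main obstacle to be the sign-change counting $t$ at the seams: one must verify that the alternation pattern prescribed in items (3) and (4) of Proposition~\ref{prop:pblock} (keep the sign of $c_m+\delta$, then alternate) meshes correctly across block boundaries and across the transition from an interior even entry to the following run of twos, so that no spurious sign change is created or destroyed. The cleanest way to control this is probably to set up the computation so that the quantity $\sum|d_i|-t$ telescopes: I would write it as a sum over maximal alternating runs of the all-even sequence, using the elementary fact that a maximal run of length $\ell$ in which consecutive entries have the same sign contributes $\sum|d_i|$ from its entries but only $\ell-1$ fewer sign changes than if they all alternated. Pushing this telescoping through, and checking that the odd-position original entries $c_i$ (which survive as single even numbers of absolute value $|c_i|$, i.e.\ $|d|=|c_i|/2$) are precisely the entries that are \emph{not} absorbed into the $\pm 2$ runs, should yield the stated identity. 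A short induction on the number of primitive blocks, or equivalently a direct summation over all blocks using the blockwise contribution computed above, then completes the proof after adding back the global $+1$.
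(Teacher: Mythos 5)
Your overall strategy is the same as the paper's: compute $\sum_i |d_i| - t$ block by block, using Proposition~\ref{prop:pblock} to see what each primitive block becomes in the all-even form. However, two of your concrete claims are wrong and would derail the computation if carried out as written. First, the single even entries produced from the odd-position originals do \emph{not} all have absolute value $2$: by Proposition~\ref{prop:pblock} the block $(c_m,\ldots,c_{m+2k})$ of common sign $\delta$ produces the entries $c_m+\delta$, $c_{m+2i}+2\delta$ ($1\le i\le k-1$) and $c_{m+2k}+\delta$, whose halved absolute values are $(|c_m|+1)/2$, $(|c_{m+2i}|+2)/2$ and $(|c_{m+2k}|+1)/2$ respectively. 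It is exactly these ``$+1/2$'' and ``$+1$'' excesses (totalling $k$ per block) that must be cancelled, and they are cancelled \emph{inside} the block: each run replacing $c_{m+2i-1}$ contributes $|c_{m+2i-1}|-1$ to $\sum|d_i|$, while the transformed block of length $1+\sum_{i=1}^k|c_{m+2i-1}|$ has all of its $\sum_{i=1}^k|c_{m+2i-1}|$ internal adjacencies counted in $t$ (signs strictly alternate throughout the block, including across the seams between a run and its neighboring single entries). Netting these gives $\bigl(\sum_{i=0}^k|c_{m+2i}|\bigr)/2$ per block with nothing left over.

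Second, and more seriously, your treatment of block boundaries is backwards. In the all-even form the sign alternation fails \emph{precisely} at the boundary between two primitive blocks (the next block's entries are multiplied by $(-1)^{|c_{m+1}|+\cdots+|c_{m+2k-1}|}$, which is exactly the sign of the last entry of the transformed current block), so block boundaries contribute $0$ to $t$, not $1$. The twistbox-sign reversal of Theorem~\ref{thm:blockdec} concerns the crossing signs $\signc$ in the link diagram, not the signs of the partial denominators $d_i$; conflating the two is what leads you to posit boundary sign changes that are then needed to ``cancel the extra $+1$'s.'' Once you replace that step with the correct observation (no sign change between blocks, all internal adjacencies are sign changes), the blockwise sum closes up exactly as in the paper and no telescoping over ``maximal runs'' is needed. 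You should also fix the labeling slip calling $c_{m+1},c_{m+3},\ldots$ the odd positions of the block; counting $c_m$ as position $1$, those are the even positions, and it is the odd-position entries $c_m,c_{m+2},\ldots,c_{m+2k}$ that survive in the final formula.
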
   
\begin{proof}
Consider a primitive block $[c_m, c_{m+1}, \ldots, c_{m+2k}]$. After
transforming the preceding primitive blocks into even denominator form,
this block turns into $[c_m\cdot\delta , c_{m+1}\cdot\delta,
  \ldots, c_{m+2k}\cdot\delta]$ for some $\delta
\in\{-1,+1\}$. Note that no sign change occurs between blocks. We use
the procedure described in Lemma~\ref{lem:2s} to transform this block
into even denominator form. For each $i\in \{1,\ldots,k\}$, the
partial denominator $c_{m+2i-1}\cdot \delta$ is replaced with
a sequence of $2$'s of length $|c_{m+2i-1}|-1$. Hence the total length of
$2k+1$ of the primitive block is increased to
$$
2k+1+\sum_{i=1}^k (|c_{m+2i-1}|-2)=1+\sum_{i=1}^k |c_{m+2i-1}|.
$$
Since signs in the transformed block alternate, the total number of sign
changes in the transformed block is
$\sum_{i=1}^k |c_{m+2i-1}|$. The sum of the halves of the absolute values of the
partial denominators in the transformed block is 
\begin{eqnarray*}
&&\frac{1}{2}\left((|c_m|+1) + (|c_{m+2k}|+1) + \sum\limits_{i=1}^{k-1} (|c_{m+2i}|+2)+
  2\sum\limits_{i=1}^k   (|c_{m+2i-1}|-1)\right)\\
  &=&
\frac{1}{2}\sum\limits_{i=0}^k |c_{m+2i}|+\sum_{i=1}^k  |c_{m+2i-1}|.
\end{eqnarray*}
Subtracting the number of sign changes yields
$\left(\sum\limits_{i=0}^k |c_{m+2i}|\right)/2$.
\end{proof}  

\begin{example}
  \label{ex:pblock3}
  {\em As seen in Example~\ref{ex:pblock}, the rational number
    $1402/1813$ has the nonalternating simple 
    continued fraction expansion $[0, 1, 3, 2, 2, 3; 5, 1, 3]$ that has
    a primitive block decomposition. By Theorem~\ref{thm:CMpblock}, the
    Cromwell-Murasugi index of $1402/1813$ is
    $$
1+\frac{1+2+3}{2}+\frac{5+3}{2}=8. 
    $$
  We have also seen in Example~\ref{ex:pblock} that the continued
  fraction expansion of $1402/1813$ is\\
  $[0, 2, -2, 2, -4, 2, -4; -6,
    4]$. By definition, the Cromwell-Murasugi index of $1402/1813$ is
  $$
(0+1+1+1+2+1+2+3+2)-6+1=13-6+1=8.
  $$
}   
\end{example}

If $b(q,p)$ is not in the preferred standard form, then the above theorem cannot be applied to the diagram $b(q,p)$ directly. That is, in order to use Murasugi's formula to calculate the braid index of $b(q,p)$, we have to apply the formula to $\tilde{b}(q,-(q-p))$ (or $\tilde{b}(q,q-p)$ since $\tilde{b}(q,q-p)$ is the mirror image of $\tilde{b}(q,-(q-p))$ and has the same braid index). For example $b(4,1)=[0,4]$ (a $(4,2)$ torus link) with the orientation given in Figure~\ref{fig:rationallink} yields the link $\tilde{b}(4,1)$, which has linking number $2$ and braid index $3$. If we reorient the top  component in $b(4,1)$, and change it to the preferred standard form as shown by the procedure illustrated in Figure~\ref{fig:preferredform}, then it becomes $\tilde{b}(4,-3)=[0,-1,-3]$, which has linking number $-2$ and braid index $2$. Passing to its mirror image $\tilde{b}(4,3)=[0,1,3]$ will lead us to the same braid index $2$, although it changes its linking number to $2$. When the rational link is a knot (that is, a link with one component), the orientation of the knot is completely determined by the orientation of the bottom strand and the knot diagram may or may not be in a preferred standard form. Specifically, $b(q,p)$ is in a preferred standard form if and only if $p$ is even ($q$ is odd since $b(q,p)$ is a knot). Thus, if $pq$ is odd, then $\tilde{b}(q,p)$ does not exist and one needs to use the same procedure shown in Figure~\ref{fig:preferredform} to change it to $\tilde{b}(q,p-q)$ in order to apply Murasugi's formula. We now establish an alternative method to compute the Cromwell-Murasugi index, using only facts about different ways to expand a rational number into continued fractions.


\begin{theorem}
\label{thm:braid}
Assume that a rational link $K$ has a preferred standard form diagram that is
represented by the alternating continued fraction
$p/q=[0,a_1,\ldots,a_n]$ where all $a_i>0$, then the braid index of $K=\tilde{b}(q,p)$ is given by
\begin{equation}\label{mid_formula}
\mathbf{b}(K)=1+\frac{1}{2}\left( \sum\limits_{j\geq 0, \signc(B_{2j+1})=\signc(B_1)} a_{2j+1}
  +\sum\limits_{j\geq 1, \signc(B_{2j})=-\signc(B_1)} a_{2j}\right)+c(p/q,n).
\end{equation}
Here the correction term $c(p/q,n)$ is given by 
$$
c(p/q,n)=
\begin{cases}
0 & \mbox{if $n$ is odd and $\signc(B_n)=\signc(B_1)$, or $n$ is even and
  $\signc(B_n)=-\sign(B_1)$;}\\
1/2 & \mbox{if $n$ is odd and $\signc(B_n)=-\signc(B_1)$, or $n$ is even and
  $\signc(B_n)=\sign(B_1)$.}\\
\end{cases}  
$$
\end{theorem}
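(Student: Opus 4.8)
The plan is to combine three earlier results. Proposition~\ref{prop:CM} identifies the braid index of $\tilde{b}(q,p)$ with the Cromwell--Murasugi index of $p/q$; Theorem~\ref{thm:CMpblock} computes that index as $1$ plus one half of the sum of the $|c_i|$ over the partial denominators occupying \emph{odd positions} in their primitive blocks; and Theorem~\ref{thm:blockdec} pins down the crossing signs $\signc(B_i)$ within and across primitive blocks. Since all $a_i>0$ we have $|a_i|=a_i$, so the whole problem reduces to showing that the two index-and-sign conditions appearing in the main sum of~(\ref{mid_formula}) select exactly the $a_i$ lying in odd positions of their primitive blocks, up to the single boundary term recorded by $c(p/q,n)$.

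First I would set up a parity dictionary. The leading $0$ forms a trivial primitive block on its own and contributes nothing, so it suffices to track the primitive blocks among $a_1,\ldots,a_n$; number them in order as $T_1,T_2,\ldots$. Every primitive block has odd length, so $T_b$ begins at a diagram index congruent to $b$ modulo $2$, and hence for $a_i$ in $T_b$ one has $i\equiv b\pmod 2$ precisely when $a_i$ occupies an odd position of $T_b$. By Theorem~\ref{thm:blockdec} the crossing sign is constant on each block and reverses between adjacent blocks, so $\signc(B_i)=\signc(B_1)$ exactly when $b$ is odd and $\signc(B_i)=-\signc(B_1)$ exactly when $b$ is even. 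Feeding this into the two summation conditions of~(\ref{mid_formula})---odd index with $\signc(B_i)=\signc(B_1)$, or even index with $\signc(B_i)=-\signc(B_1)$---shows that both amount to $i\equiv b\pmod 2$, i.e.\ to $a_i$ being in an odd position. Thus when the given nonalternating form $[0,a_1,\ldots,a_n]$ itself has a primitive block decomposition, the bracketed sum in~(\ref{mid_formula}) is exactly $2(\mathbf{b}(K)-1)$ by Theorem~\ref{thm:CMpblock}, and one checks that here $B_n$ is the last (hence odd-position) entry of its block, forcing $n\equiv b\pmod 2$ and so $c(p/q,n)=0$.

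The delicate case, and the main obstacle, is when $[0,a_1,\ldots,a_n]$ has no primitive block decomposition. By Lemma~\ref{lem:nonaltu} and Theorem~\ref{thm:noepb} the companion form $[0,a_1,\ldots,a_{n-1},a_n-1,1]$ does, and because $pq$ is even this decomposition has no exceptional block; the failure of the given form is caused by a final run $a_m,\ldots,a_n$ (with $a_m$ odd and all intermediate even-offset entries even) that never closes a block. I would show, via the automaton of Figure~\ref{fig:automaton}, that the crossing sign stays constant along this entire trailing run, so that reading off the conditions of~(\ref{mid_formula}) exactly as before, the main sum recovers every odd-position contribution of the companion form except its trailing $1$, which sits at an odd position and contributes $\tfrac12\cdot 1$. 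Comparing the Cromwell--Murasugi index computed from the companion form with the right-hand side of~(\ref{mid_formula}) then shows the two differ by precisely this $\tfrac12$, which is exactly $c(p/q,n)=1/2$.

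Finally I would match the case split in $c(p/q,n)$ to the dichotomy ``the given form does/does not admit a primitive block decomposition,'' by verifying that $B_n$ occupies an odd position (equivalently $n\equiv b\pmod 2$, equivalently the first/third sign--parity alternatives) exactly when the block containing it closes at $a_n$. I expect the principal technical work to be the bookkeeping in the non-decomposable case: confirming that the crossing signs are genuinely constant on the unclosed trailing run, that the companion form introduces exactly one extra odd-position entry equal to $1$, and that the four sign--parity combinations in the definition of $c(p/q,n)$ line up with existence of a primitive block decomposition for $[0,a_1,\ldots,a_n]$.
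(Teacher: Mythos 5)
Your plan is essentially the paper's own proof: both arguments reduce the statement to Proposition~\ref{prop:CM} and Theorem~\ref{thm:CMpblock} via the sign information in Theorem~\ref{thm:blockdec}, split on whether $[0,a_1,\ldots,a_n]$ itself admits a primitive block decomposition, and in the negative case pass to the companion nonalternating form to locate the extra $1/2$. Your ``parity dictionary'' (odd position within a block $\Leftrightarrow$ $i\equiv b \pmod 2$, where $b$ is the index of the block, $\Leftrightarrow$ the two sign--parity conditions in~(\ref{mid_formula})) is exactly the bookkeeping the paper carries out in its Case~1, just packaged more explicitly, and your matching of the $c(p/q,n)$ case split to ``the last entry of the last block sits at position $n$ (resp.\ $n+1$)'' is the same observation the paper uses.

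The one place where your description does not quite cover the ground is the subcase $a_n=1$. There the companion form is not $[0,a_1,\ldots,a_{n-1},a_n-1,1]$ (which would be singular, since $a_n-1=0$) but rather $[0,a_1,\ldots,a_{n-2},a_{n-1}+1]$, and the missing $1/2$ does not come from ``one extra odd-position entry equal to $1$'': instead, the sum in~(\ref{mid_formula}) contributes $a_{n-1}/2$ where the decomposable form calls for $(a_{n-1}+1)/2$, while $a_n=1$ itself is excluded by the sign conditions. The paper handles this as a separate Subcase~2b. This is a routine repair within your framework rather than a flaw in the approach, but as written your trailing-run analysis would not apply verbatim to that subcase.
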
  

\begin{remark}{\em
Since $K$ is in a preferred standard form, it is necessary that $pq$ is even. It is important that we apply the theorem to the preferred standard form only. For example for the knot $10_{37}=b(53,30)=b(53,23)$, 
$30/53=[0,1^+,1^+,3^+,3^-,2^-]$ is in the preferred standard form while $23/53=[0,2^-,3^-,3^+,2^+]$ is not. Consequently, if we apply Theorem~\ref{thm:braid} to $30/53=[0,1^+,1^+,3^+,3^-,2^-]$ we obtain the correct braid index of five, however if we use $23/53=[0,2^-,3^-,3^+,2^+]$ then the formula in Theorem~\ref{thm:braid} yields an incorrect braid index of three.}
\end{remark}

\begin{proof}
Since $pq$ is even, it has a unique nonalternating continued fraction
expansion that has primitive block decomposition, and this primitive
block decomposition contains no exceptional primitive block by
Theorem~\ref{thm:noepb}. 

{\bf\noindent Case 1:} $[0,a_1,\ldots,a_n]$ is the nonalternating
continued fraction expansion of $p/q$ that has a primitive block
decomposition. By Theorem~\ref{thm:blockdec}, the signs of all
crossings are the same in all twistboxes labeled by partial denominators
associated to the same primitive block, and they are opposite for crossings in
twistboxes associated to adjacent primitive blocks. 

By Theorem~\ref{thm:CMpblock}, computing the braid index of $K$ is one
more than the sum of all $a_i/2$ such that $a_i$ is in an odd position in
the block containing it. For the block containing $a_1$ this formula
calls for summing over all odd indices $i$ in the block. In the next
block, the sign of the crossings is opposite and the entries $a_i$ are the even
indexed entries. Since all primitive blocks have an odd number of
entries, the same pattern continues all the way: we have to take the sum
of all $a_{2i+1}/2$ such that the sign of the crossings in twistbox
number $2i+1$ is the same as the sign of the crossings in the
twistbox number $1$, and we have to add the sum
of all $a_{2i}/2$ such that the sign of the crossings in the twistbox
number $2i$ is the opposite of the sign of the crossings in the
twistbox number $1$. In this case, we want the correcting term
$c(p/q,n)$ to equal zero. The sign of all crossings in twistbox number
$n$ is the same as the sign of all crossings in the first twistbox
exactly when the number of primitive blocks containing
$a_1,\ldots,a_n$ is odd, which is equivalent to $n$ being odd.

{\bf\noindent Case 2:} $[0,a_1,\ldots,a_n]$ is the other nonalternating
continued fraction expansion of $p/q$, the one that has no primitive
block decomposition. As seen in the proof of Theorem~\ref{thm:CMpblock},
the nonalternating continued fraction expansion of $p/q$ that has a
primitive block expansion is $[0,a_1,\ldots,a_{n-1}, a_{n}-1,1]$ if
$a_n>1$ and it is $[0,a_1,\ldots,a_{n-2}, a_{n-1}+1]$ if
$a_n=1$.

{\bf\noindent Subcase 2a:} We assume $a_n>1$. In this case the primitive
block decomposition of $[0,a_1,\ldots,a_{n-1}, a_{n}-1,1]$ ends with a
nontrivial block containing $a_{n+1}:=1$ (by Theorem~\ref{thm:CMpblock}) and the sign of the crossing in
twistbox number $n+1$ is the same as the sign of all crossings in
twistbox number $n$. This sign is the same as $\signc(B_1)=\signc(B_n)$ if $n+1$ is
odd, and it is $-\signc(B_1)=\signc(B_n)$ if $n+1$ is even. So the partial denominator
$a_{n+1}=1$ contributes $1/2$ to the braid index. This is the only term
not included in the sum
$$
1+\frac{1}{2}\left(\sum\limits_{j\geq 0, \signc(B_{2j+1})=\signc(B_1)} a_{2j+1}
  +\sum\limits_{j\geq 1, \signc(B_{2j})=-\signc(B_1)} a_{2j}\right)
$$
computed using $[0,a_1,\ldots,a_n]$. Note that for $[0,a_1,\ldots,a_n]$ or for  $[0,a_1,\ldots,a_n-1,1]$ the term
$a_n/2$ or $(a_n-1)/2$ does not appear in our formula. The correct
braid index is thus obtained by adding the correcting term $c(p/q,n)=1/2$ to the contribution of the $a_i$s. 

{\bf\noindent Subcase 2b:} We have $a_n=1$. In this case
$[0,a_1,\ldots,a_{n-2}, a_{n-1}+1]$ has a primitive block
decomposition. We either have that $\signc(B_1)=\signc(B_{n-1})$ and $n-1$ is
odd, or that $\signc(B_1)=-\signc(B_{n-1})$ and $n-1$ is even. 
Hence $a_n/2$ in not included in the sum 
$$
1+\frac{1}{2}\left(\sum\limits_{j\geq 0, \signc(B_{2j+1})=\signc(B_1)} a_{2j+1}
  +\sum\limits_{j\geq 1, \signc(B_{2j})=-\signc(B_1)} a_{2j}\right)
$$
and the contribution $a_{n-1}/2$ needs to be increased by $1/2$ to
obtain the correct braid index.  
\end{proof}  

\begin{remark}
\label{mirrorinvariance}{\em
The formula of Theorem~\ref{thm:braid} is invariant under a change to a mirror image. If we compare two mirror image rational link diagrams given by the vectors $[0,a_1,a_2,\ldots,a_n]$ and $[0,-a_1,-a_2,\ldots,-a_n]$ then all signs $\signc(B_i)$ change but this has no effect on the result in the formula of Theorem~\ref{thm:braid}. }
\end{remark}

\section{Connecting our braid index formulas}
\label{sec:connect}
 
Let us now present the formula derived in~\cite{DL}, starting with the convention used there. Let $p$, $q$ be a pair of positive and co-prime integers such that $q> p$ and consider the minimal (alternating) link diagram obtained from the expansion $p/q=[0,a_1,a_2,\ldots,a_n]$ with positive entries $a_1\cdots a_n\neq 0$. The ``standard form" adopted in~\cite{DL} is similar to the link diagram given in Figure~\ref{fig:rationallink} with the following differences: (i) it is based on an expansion $p/q=[0,a_1,a_2,\ldots,a_n]$ with positive entries $a_1\cdots a_n\neq 0$; (ii) it is required that $n=2k+1$ be odd: this can always be done since if $n$ is even then it is necessary that $a_n>1$ and we can move to $p/q=[0, a_1,\ldots,a_n- 1,1]$; 
(iii) the orientation of the bottom left strand is from right to left, but there is no restriction on the orientation of the top left strand; (iv) it is required that $\signt(a_1)=-1$ (equivalently one could also require that $\signt(a_1)=+1$ instead). We will call an oriented rational link diagram in this form the {\em alternative standard form} to distinguish it from the previously defined preferred standard form, and denote it by $\bb(q,p)$. Define $b_i=\signc(a_i) a_i$, and $(b_1,b_2,...,b_{2k+1})$ is called the {\em signed vector} of the rational link $\bb(q,p)$. Under these assumptions the authors proved the following theorem in~\cite{DL}:

\begin{theorem}\label{2bridge_theorem}
Let $K=\bb(q,p)$ be a rational link diagram in an alternative standard form with signed vector $(b_1,b_2,...,b_{2k+1})$, then the braid index of $K$ is given by
\begin{equation}\label{2bridgeformula}
\textbf{b}(K)=1+\frac{2+\signc(b_1)+\signc(b_{2k+1})}{4}+\sum_{b_{2j}>0,1\le j\le k}\frac{b_{2j}}{2}+\sum_{b_{2j+1}<0,0\le j\le k}\frac{|b_{2j+1}|}{2}
\end{equation}
\end{theorem}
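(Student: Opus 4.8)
The plan is to show that the right-hand side of~(\ref{2bridgeformula}) equals the braid index by reducing to Theorem~\ref{thm:braid}, whose formula~(\ref{mid_formula}) already expresses $\mathbf{b}(K)$ in terms of crossing signs. The two statements are phrased for different normalizations: Theorem~\ref{thm:braid} uses the preferred standard form $\tilde{b}(q,p)$ with all $a_i>0$ and $\signc(B_1)=+1$, while the present theorem uses the alternative standard form $\bb(q,p)$ with $\signt(a_1)=-1$ and $n=2k+1$ odd. Since $\signt(a_1)=(-1)^{0}\sign(a_1)=+1$ whenever $a_1>0$, the requirement $\signt(a_1)=-1$ signals that $\bb(q,p)$ is recorded in the mirrored twisting convention; consequently the bulk of the work is a dictionary between the two conventions, followed by an algebraic identity.

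First I would relate the two diagrams. Because $n=2k+1$ is odd by hypothesis, no preliminary adjustment of the expansion is needed. The mirror image of $\bb(q,p)$ is in preferred standard form, and by Lemma~\ref{lem:flip} (applicable when $|a_1|>1$, the case $a_1=1$ being treated separately via Lemma~\ref{lem:add}) its continued fraction is $(q-p)/q=[0,\sign(a_1),a_1-\sign(a_1),a_2,\ldots,a_n]$. I would then invoke Remark~\ref{mirrorinvariance}, that the braid index is unchanged under mirror image, together with the invertibility and orientation discussion preceding Lemma~\ref{lem:flip}, to guarantee that the oriented link produced is the intended one and that $\mathbf{b}(K)$ equals the value of~(\ref{mid_formula}) evaluated on this preferred form.

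Next I would translate~(\ref{mid_formula}) back into the signed-vector language of~(\ref{2bridgeformula}). Writing $b_i=\signc(a_i)a_i$ gives $|b_i|=a_i$ and $\signc(b_i)=\signc(a_i)=\signc(B_i)$, so each term $a_i/2$ in~(\ref{mid_formula}) becomes $|b_i|/2$. The key structural point is that the flip of Lemma~\ref{lem:flip} splits $a_1$ into the two entries $\sign(a_1),a_1-\sign(a_1)$, shifting every subsequent index by one; this interchanges the roles of the odd and even positions, which is exactly why~(\ref{2bridgeformula}) selects the even-indexed $b_{2j}>0$ and the odd-indexed $b_{2j+1}<0$, opposite to the parity conditions in~(\ref{mid_formula}). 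The crossing-sign pattern recorded by Theorem~\ref{thm:blockdec}, namely constant within a primitive block and alternating between adjacent blocks, then identifies the two index sets term by term, so that the two main sums coincide.

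The main obstacle is the reconciliation of the boundary data: the endpoint contributions of $a_1$ and $a_n$ together with the correction term $c(p/q,n)$ in~(\ref{mid_formula}) must be shown to equal the single fractional term $\tfrac{1}{4}(2+\signc(b_1)+\signc(b_{2k+1}))$ in~(\ref{2bridgeformula}). I expect this to require a case analysis according to the four sign combinations of $\signc(b_1)$ and $\signc(b_{2k+1})$, equivalently according to the parities and orientations that determine $c(p/q,n)$. The point is that the entry $\sign(a_1)$ split off at the front by Lemma~\ref{lem:flip} always contributes a fixed $1/2$, and this, combined with $c(p/q,n)\in\{0,1/2\}$ and the inclusion or exclusion of the terminal term $a_n/2$, reproduces the three possible values $0,1/2,1$ of the fractional correction. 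Verifying that the symmetric expression in $\signc(b_1)$ and $\signc(b_{2k+1})$ accounts for both endpoints simultaneously, and that the mirror correspondence of the first step assigns the orientations consistently, is the delicate part; the interior terms, by contrast, match immediately from the dictionary above.
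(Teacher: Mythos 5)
Your overall strategy -- reduce formula~(\ref{2bridgeformula}) to the Cromwell--Murasugi formula~(\ref{mid_formula}) of Theorem~\ref{thm:braid} -- is exactly the route the paper takes in Section~\ref{sec:connect} (Theorems~\ref{main_theorem_even} and~\ref{main_theorem_odd}; the theorem itself is quoted from~\cite{DL}, but the Section~\ref{sec:connect} equivalence together with Theorem~\ref{thm:braid} amounts to an independent derivation). However, your execution has a genuine gap at its central step. You apply the flip of Lemma~\ref{lem:flip} universally and claim that the resulting index shift ``interchanges the roles of the odd and even positions,'' reconciling supposedly opposite parity conditions in the two formulas. The parity conditions are not opposite. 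When $\bb(q,p)$ is already in preferred standard form (the case of Theorem~\ref{main_theorem_even}), one has $K=\bb(q,p)=\tb(q,-p)=(0,-a_1,\ldots,-a_n)$, hence $\signc(B_1)=\signc(b_1)=-1$, and the selection in~(\ref{mid_formula}) (odd-indexed entries with $\signc(B_{2j+1})=\signc(B_1)=-1$, even-indexed with $\signc(B_{2j})=+1$) coincides term by term with the selection $b_{2j+1}<0$, $b_{2j}>0$ in~(\ref{2bridgeformula}) with no shift at all. Flipping in this case is not harmless bookkeeping: the procedure of Figure~\ref{fig:preferredform} is only a device for converting a \emph{non}-preferred diagram into a preferred one, and for two-component links a preferred diagram and the diagram with one component reoriented are generally different oriented links with different braid indices (the paper's own example: $\tilde{b}(4,1)$ has braid index $3$ while $\tilde{b}(4,-3)$ has braid index $2$). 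So the single mechanism you propose fails precisely in the main case.

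In the remaining case, where $\bb(q,p)$ has no preferred standard form and the flip is genuinely needed, your description of the induced transformation on the signed vector is incomplete. It is not only the front entry that splits into $\sign(a_1),\,a_1-\sign(a_1)$: to restore an odd-length alternative standard form the last entry must also be modified, all crossing signs must be recomputed, and there are four cases according to whether $|b_1|=1$ and whether $|b_{2k+1}|=1$ (listed explicitly before Theorem~\ref{main_theorem_odd}). Your assertion that the split-off entry ``always contributes a fixed $1/2$'' is also false: the new first entry is $c_1=-\signc(b_1)$, which is an odd-indexed entry contributing $|c_1|/2=1/2$ only when it is negative, i.e.\ when $b_1>0$; when $b_1<0$ it contributes $0$. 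The correct reconciliation of the boundary term $\tfrac{1}{4}\bigl(2+\signc(b_1)+\signc(b_{2k+1})\bigr)$ with $c(p/q,n)$ therefore requires the sign-by-sign case analysis the paper carries out (the quantities $\Delta(c_1),\Delta(c_2),\Delta(c_{2k+2}),\Delta(c_{2k+3})$ in~(\ref{contribution})), not a uniform $1/2$. To repair the proof, split into the two cases of Theorems~\ref{main_theorem_even} and~\ref{main_theorem_odd} and carry out the endpoint analysis in each.
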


\begin{remark}
If $\signt(a_1)=+1$ is used in the definition of the alternative standard form then the formula in Theorem~\ref{2bridge_theorem} becomes 
\begin{equation}\label{2bridgeformula_2}
\textbf{b}(K)=1+\frac{2-\signc(b_1)-\signc(b_{2k+1})}{4}+\sum_{b_{2j}<0,1\le j\le k}\frac{|b_{2j}|}{2}+\sum_{b_{2j+1}>0,0\le j\le k}\frac{b_{2j+1}}{2}.
\end{equation} 
\end{remark}

\comment{
\begin{remark}
\label{lastsign}{\em 
The conventions employed will only allow certain crossing signs for $a_{2k+1}$ under the assumption that all $a_i$ are positive. $pq$ is odd if and only if $\signc(a_{2k+1})=1$. This happens because if $pq$ is odd then we must have a knot and $p/q$ must be a parity $(1)$-tangle. Similarly $p$ is even and $q$ is odd if and only if  $\signc(a_{2k+1})=-1$. This happens because we must also have a knot and $p/q$ must be a parity $(0)$-tangle. An example of this second case is shown in Figure~\ref{parityinfinity}. If $q$ is even and $p$ is odd then we have a link ($p/q$ must be a parity $(\infty)$-tangle) and the crossings in $a_{2k+1}$ can have either sign because the second component can have either orientation. However because of the choice of the orientation we must have $\signc(a_{1})=-1$.}
\end{remark}

\begin{figure}[htb!]
\begin{center}
\includegraphics[scale=.5]{fig15}
\end{center}
\caption{Left: A standard drawing of the rational tangle $56/191 = A(3,2,2,3,3)$; Right: The denominator $D(A(3,2,2,3,3))$ is a standard two bridge link diagram of the two bridge link $b(191,58)=b(191,56)$.}
\label{parityinfinity}
\end{figure}
}

\begin{theorem}\label{main_theorem_even}
For an oriented rational link $K=\b(q,p)$ with $pq$ even, the braid index computed from a diagram in a preferred standard form $\tilde{b}(q,p)$ is equal to the braid index computed from a diagram in an alternative standard form $\bb(q,p)$, {\em i.e.}, the formulation of the braid index of $K$ given by Theorem~\ref{2bridge_theorem} is equivalent to the Cromwell-Murasugi index given by Theorem~\ref{thm:braid}.
\end{theorem}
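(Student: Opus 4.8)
The content of this statement is an identity between the two braid-index expressions~(\ref{mid_formula}) and~(\ref{2bridgeformula}). Conceptually the identity is forced, since each theorem evaluates the braid index of a diagram of $K$ (the alternative standard form being the mirror image of the preferred one, whose braid index agrees with that of $K$); the substance of the proof is to make the equality of the two right-hand sides explicit. The plan is therefore to reduce both sides to a single continued fraction and a single family of crossing signs and then to compare them termwise. I would first fix the positive nonalternating expansion $p/q=[0,a_1,\ldots,a_n]$ with all $a_i>0$ and, using $[\ldots,a_n]=[\ldots,a_n-1,1]$, normalize it so that $n=2k+1$ is odd; this is exactly the shape demanded by the alternative standard form, while Theorem~\ref{thm:braid} applies to the preferred standard form built from the same expansion. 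Since $pq$ is even the preferred form $\tilde{b}(q,p)$ exists, and because all entries are positive the convention recorded by the automaton gives $\signc(B_1)=+1$.

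Next I would reconcile the two twist-sign conventions. For $a_1>0$ the diagram of Figure~\ref{fig:rationallink} has $\signt(a_1)=+1$, whereas the alternative standard form demands $\signt(a_1)=-1$; the two diagrams are mirror images, so every crossing sign is reversed, $\b(K)$ is unchanged, and by Remark~\ref{mirrorinvariance} the expression in Theorem~\ref{thm:braid} is unchanged as well. Writing the signed entries as $b_i=\signc(a_i)\,a_i$ with $a_i>0$, the alternative-form conditions $b_{2j}>0$ and $b_{2j+1}<0$ translate into $\signc(B_{2j})=-1$ and $\signc(B_{2j+1})=+1$ in the preferred form, while $b_{2j}=a_{2j}$ and $|b_{2j+1}|=a_{2j+1}$.

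Under this translation the two principal sums become literally the same: both~(\ref{2bridgeformula}) and~(\ref{mid_formula}) contribute $\tfrac12\sum a_{2j+1}$ over the odd positions with $\signc(B_{2j+1})=\signc(B_1)$ and $\tfrac12\sum a_{2j}$ over the even positions with $\signc(B_{2j})=-\signc(B_1)$, the ranges $0\le j\le k$ and $1\le j\le k$ exhausting $a_1,\ldots,a_n$ because $n$ is odd. It then remains only to match the boundary terms. Using $\signc(B_1)=+1$ and $B_{2k+1}=B_n$, together with the sign reversal $\signc(b_1)=-1$ and $\signc(b_{2k+1})=-\signc(B_n)$, the term $\frac{2+\signc(b_1)+\signc(b_{2k+1})}{4}$ of~(\ref{2bridgeformula}) collapses to $\frac{1-\signc(B_n)}{4}$, which equals $0$ when $\signc(B_n)=+1$ and $1/2$ when $\signc(B_n)=-1$. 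This is exactly the value of $c(p/q,n)$ for odd $n$: zero when $\signc(B_n)=\signc(B_1)$ and $1/2$ otherwise. Hence the two right-hand sides agree.

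The genuine obstacle is the convention bookkeeping of the second step: one must verify carefully that the passage from the preferred to the alternative standard form is realized by the global reversal of twist signs (equivalently, by the mirror image), so that every $\signc(B_i)$ flips and the reference sign $\signc(B_1)=+1$ becomes $\signc(b_1)=-1$, and that this reversal leaves~(\ref{mid_formula}) invariant as asserted in Remark~\ref{mirrorinvariance}. Once that identification is in place, the principal sums coincide on the nose and the correction terms match by the short case analysis above. If one prefers to avoid the mirror step altogether, the same computation runs against the $\signt(a_1)=+1$ variant~(\ref{2bridgeformula_2}), whose crossing signs already agree with those of the preferred form, so that the two formulas line up termwise immediately.
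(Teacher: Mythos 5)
Your proof is correct and follows essentially the same route as the paper's: identify the preferred and alternative standard forms of the same link (with the sign flip handled by Remark~\ref{mirrorinvariance}), translate the conditions $b_{2j}>0$, $b_{2j+1}<0$ into crossing-sign conditions on the twistboxes so that the two principal sums coincide termwise, and check that the boundary term $\frac{2+\signc(b_1)+\signc(b_{2k+1})}{4}$ agrees with the correction $c(p/q,n)$. The only organizational difference is that you normalize to odd length $n=2k+1$ at the outset, whereas the paper keeps the given expansion and treats even $n$ as a separate case in which the alternative form appends a final $1$.
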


\begin{proof}
Without loss of generality assume that $\signt(a_1)=-1$ is used in the definition of the alternative standard form and formula~(\ref{2bridgeformula}) is used. Since the diagram is also in a preferred standard form, it implies that $K= \bb(q,p)=\tb(q,-p)=(0;-a_1,\ldots,-a_n)$, hence $\signc(B_1)=-1=\signc(b_1)$. Thus the formula of Theorem~\ref{thm:braid} looks like
$$
\mathbf{b}(K)=1+\frac{1}{2}\left( \sum\limits_{j\geq 0, \signc(B_{2j+1})=-1} a_{2j+1}
  +\sum\limits_{j\geq 1, \signc(B_{2j})=1} a_{2j}\right)+c(p/q,n).
$$

Case 1: $n=2k+1$.
If $\signc(B_1) = \signc(B_{2k+1})$ then $\signc(b_1)= \signc(b_{2k+1})=-1$ and $c(p/q,n)=0= \frac{2+\signc(b_1)+\signc(b_{2k+1})}{4}$.
If $\signc(B_1) \ne \signc(B_{2k+1})$ then $\signc(b_1)+ \signc(b_{2k+1})=0$ and $c(p/q,n)=1/2= \frac{2+\signc(b_1)+\signc(b_{2k+1})}{4}$.
Moreover
$$
\frac{1}{2}\left(\sum\limits_{j\geq 0, \signc(B_{2j+1})=-1} a_{2j+1}
  +\sum\limits_{j\geq 1, \signc(B_{2j})=1} a_{2j}\right)=\sum_{b_{2j+1}<0,0\le j\le k}\frac{|b_{2j+1}|}{2}+\sum_{b_{2j}>0,1\le j\le k}\frac{b_{2j}}{2},
$$
and we have equality between Theorems~\ref{thm:braid} and~\ref{2bridge_theorem}.

Case 2: $n=2k$. In this case it is necessary that $a_{2k}>1$ and $\bb(q,p)=(0,a_1,a_2,...,a_{2k}-1,1)$. Furthermore, $b_{2k}=\signc(a_{2k})(a_{2k}-1)$, $a_{2k+1}=1$ and $\signc(a_{2k+1})=\signc(a_{2k})$ as one can easily check. So formula~(\ref{2bridgeformula}) becomes
$$
1+\frac{1}{2}+\sum_{b_{2j}>0,1\le j\le k}\frac{a_{2j}}{2}+\sum_{b_{2j+1}<0,0\le j\le k-1}\frac{a_{2j+1}}{2}
$$
if $\signc(a_{2k})=-1$ and 
$$
1+\sum_{b_{2j}>0,1\le j\le k}\frac{a_{2j}}{2}+\sum_{b_{2j+1}<0,0\le j\le k-1}\frac{a_{2j+1}}{2}
$$
if $\signc(a_{2k})=+1$.
On the other hand, we have $\tb(q,-p)=(0,-a_1,-a_2,...,-a_{2k})$.
If $-1=\signc(B_{1})\ne\signc(B_{2k})$, then $c(p/q,2k)=0$ and~(\ref{mid_formula}) becomes
\begin{eqnarray*}
\mathbf{b}(K)&=&1+\frac{1}{2}\left( \sum\limits_{j\geq 0, \signc(B_{2j+1})=-1} a_{2j+1}
  +\sum\limits_{j\geq 1, \signc(B_{2j})=+1} a_{2j}\right)\\
&=&
1+\sum_{b_{2j}>0,1\le j\le k}\frac{a_{2j}}{2}+\sum_{b_{2j+1}<0,0\le j\le k-1}\frac{a_{2j+1}}{2}.
\end{eqnarray*}
If $-1=\signc(B_{1})=\signc(B_{2k})$ then $c(p/q,2k)=1/2$ and~(\ref{mid_formula}) becomes 
\begin{eqnarray*}
\mathbf{b}(K)&=&
1+\frac{1}{2}+\frac{1}{2}\left(\sum\limits_{j\geq 0, \signc(B_{2j+1})=-1} a_{2j+1}
  +\sum\limits_{j\geq 1, \signc(B_{2j})=+1} a_{2j}\right)\\
 &=& 1+\frac{1}{2}+\sum_{b_{2j}>0,1\le j\le k}\frac{a_{2j}}{2}+\sum_{b_{2j+1}<0,0\le j\le k-1}\frac{a_{2j+1}}{2}.
\end{eqnarray*}
Thus we have equality between Theorems~\ref{thm:braid} and~\ref{2bridge_theorem}. 
\end{proof}

Theorem~\ref{main_theorem_even} settles the case when $\b(q,p)$ has a preferred standard rational link diagram. What if $\b(q,p)$ does not have a preferred standard rational link diagram? Since Theorem~\ref{thm:braid} cannot be applied directly to it, we cannot compare the formula~(\ref{2bridgeformula}) obtained directly from  
$\bb(q,p)$ with formula~(\ref{mid_formula}). However, recall that if $\bb(q,p)$ is not in the preferred standard form, then its mirror image can be deformed into a preferred standard form $\tb(q,q-p)$ which is also in an alternative standard form (one may have to adjust $a_n$ to create a signed vector of odd length), by the procedure illustrated in Figure~\ref{fig:preferredform}. 
In the following we show that there is an elementary proof showing that the braid index formula~(\ref{2bridgeformula}) based on the diagram $\bb(q,p)$ is the same as the one based on the diagram $\bb(q,q-p)$. 

Let $(b_1, b_2, ..., b_{2k+1})$ be the signed vector of $b(q,p)$ in its alternate standard form. The signed vector $(c_1,c_2,...,c_{2k'+1})$ of $b(q,q-p)$ depends on the values of $b_1$ and $b_{2k+1}$ and is given in the following cases:  
$$
\begin{array}{ll}
-\big(\signc(b_1),(\signc(b_1)(|b_1|-1),b_2,...,b_{2k}, (\signc(b_{2k+1})(|b_{2k+1}|-1), \signc(b_{2k+1})\big),& |b_1|>1, |b_{2k+1}|>1;\\
-\big((\signc(b_2)(|b_2|+1),b_3,...,b_{2k}, (\signc(b_{2k+1})(|b_{2k+1}|-1), \signc(b_{2k+1})\big),& |b_1|=1, |b_{2k+1}|>1;\\
-\big((\signc(b_1),(\signc(b_1)(|b_1|-1),b_2,...,b_{2k-1}, (\signc(b_{2k})(|b_{2k}|+1)\big),& |b_1|>1, |b_{2k+1}|=1;\\ 
-\big((\signc(b_2)(|b_2|+1),b_3,...,b_{2k-1}, (\signc(b_{2k})(|b_{2k}|+1)\big),& |b_1|=1, |b_{2k+1}|=1.
\end{array}
$$
By~(\ref{2bridgeformula}), we have 
\begin{equation}\label{toverify}
\textbf{b}(b(q,p))=1+\frac{2+\signc(b_1)+\signc(b_{2k+1})}{4}+\sum_{b_{2j}>0,1\le j\le k}\frac{b_{2j}}{2}+\sum_{b_{2j+1}<0,0\le j\le k}\frac{|b_{2j+1}|}{2}.
\end{equation}
On the other hand, one can verify that applying~(\ref{2bridgeformula}) to the signed vector $(c_1,c_2,...,c_{2k'+1})$ for $b(q,q-p)$ as listed above yields exactly the braid index. We will verify the case when $|b_1|>1$ and $|b_{2k+1}|>1$, and leave the other cases to the reader. Notice that if we denote the signed vector of $b(q,q-p)$ by $(c_1,c_2,...,c_{2k+3})$, then we have $c_1=-\signc(b_1)$, $c_2=-\signc(b_1)(|b_1|-1)$, $c_m=-b_{m-1}$ for $3\le m\le 2k+1$, $c_{2k+2}=-\signc(b_{2k+1})(|b_{2k+1}|-1)$, $c_{2k+3}=-\signc(b_{2k+1})$. By~(\ref{2bridgeformula}) we have
\begin{eqnarray}
\textbf{b}(b(q,q-p))&=&1+\frac{2+\signc(c_1)+\signc(c_{2k+3})}{4}+\sum_{c_{2j}>0,1\le j\le k+1}\frac{c_{2j}}{2}+\sum_{c_{2j+1}<0,0\le j\le k+1}\frac{|c_{2j+1}|}{2}\nonumber\\
&=&
1+\frac{2-\signc(b_1)-\signc(b_{2k+1})}{4}+\sum_{b_{2j+1}<0,1\le j\le k-1}\frac{|b_{2j+1}|}{2}+\sum_{b_{2j}>0,2\le j\le k}\frac{b_{2j}}{2}\nonumber\\
&+&\Delta(c_1)+\Delta(c_2)+\Delta(c_{2k+2})+\Delta(c_{2k+3}),\label{contribution}
\end{eqnarray}
where $\Delta(c_i)$ is the contribution of the crossings corresponding to $c_i$ ($i=1,2,2k+2,2k+3$). There are four cases to verify: (i) $b_1>1$, $b_{2k+1}>1$; (ii) $b_1>1$, $b_{2k+1}<-1$; (iii) $b_1<-1$, $b_{2k+1}>1$; (iv) $b_1<-1$, $b_{2k+1}<-1$.

(i) $c_1=-1$, $c_2=-(b_1-1)<0$, $c_{2k+3}=-1$ and $c_{2k+2}=-(b_{2k+1}-1)<0$. It follows that $\Delta(c_1)=1/2=2\signc(b_1)/4$, $\Delta(c_2)=0$, $\Delta(c_{2k+2})=0$ and $\Delta(c_{2k+3})=1/2=2\signc(b_{2k+1})/4$, thus the summation in~(\ref{contribution}) equals the summation in~(\ref{toverify}).

(ii) $c_1=-1$, $c_2=-(b_1-1)<0$, $c_{2k+3}=1$ and $c_{2k+2}=(|b_{2k+1}|-1)>0$. It follows that $\Delta(c_1)=1/2=2\signc(b_1)/4$, $\Delta(c_2)=0$, $\Delta(c_{2k+2})=(|b_{2k+1}|-1)/2=|b_{2k+1}|/2-1/2=|b_{2k+1}|/2+2\signc(b_{2k+1})/4$ and $\Delta(c_{2k+3})=0$, thus the summation in~(\ref{contribution}) also equals the summation in~(\ref{toverify}).

(iii) and (iv) are similar and left to the reader.

This proves the following:

\begin{theorem}\label{main_theorem_odd}
Assume that $p/q=[0,a_1,\ldots,a_n]$ and $\b(q,p)$ does not have a preferred standard form. Then the formulation of the braid index of $\bb(q,p)$ given by Theorem~\ref{2bridge_theorem} is equivalent to the Cromwell-Murasugi index given in Theorem~\ref{thm:braid} applied to mirror image $\bb(q,q-p)$.
\end{theorem}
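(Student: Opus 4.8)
The plan is to reduce the statement to a single arithmetic identity: I claim it suffices to show that formula~(\ref{2bridgeformula}) returns the same value when evaluated on the signed vector of $\bb(q,p)$ and on the signed vector of $\bb(q,q-p)$. The reason this suffices is Theorem~\ref{main_theorem_even}. After the deformation of Figure~\ref{fig:preferredform}, the diagram $\bb(q,q-p)$ is in preferred standard form (here $pq$ is odd, so $(q-p)q$ is even and the hypotheses apply), and Theorem~\ref{main_theorem_even} then guarantees that applying Theorem~\ref{2bridge_theorem} to $\bb(q,q-p)$ already agrees with the Cromwell--Murasugi index of Theorem~\ref{thm:braid}. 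Thus the only genuinely new content is the comparison of the two evaluations of formula~(\ref{2bridgeformula}).

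First I would compute the signed vector $(c_1,\ldots,c_{2k'+1})$ of $\bb(q,q-p)$ from the signed vector $(b_1,\ldots,b_{2k+1})$ of $\bb(q,p)$. This is dictated by the involution $p/q\mapsto (q-p)/q$ of Lemma~\ref{lem:flip}, which replaces the leading partial denominator by the block $\sign(c_1),\,c_1-\sign(c_1)$ and negates the entire sequence. Because this operation can either create or absorb a leading (respectively trailing) entry of absolute value $1$, the result breaks into the four displayed cases according to whether $|b_1|=1$ or $|b_1|>1$ and whether $|b_{2k+1}|=1$ or $|b_{2k+1}|>1$.

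Next I would substitute both signed vectors into formula~(\ref{2bridgeformula}) and subtract. The interior entries transform simply: in the generic case $c_m=-b_{m-1}$ for $3\le m\le 2k+1$, so after accounting for the index shift and the global negation, the interior sums over positive even-indexed and negative odd-indexed entries cancel term by term against their counterparts for $\bb(q,p)$. What remains is a finite boundary computation, carried out in equations~(\ref{toverify})--(\ref{contribution}): I must verify that the correction fraction $(2+\signc(c_1)+\signc(c_{2k'+1}))/4$ together with the individual contributions $\Delta(c_1),\Delta(c_2),\Delta(c_{2k+2}),\Delta(c_{2k+3})$ of the four boundary entries reassemble into the correction fraction $(2+\signc(b_1)+\signc(b_{2k+1}))/4$ plus the boundary summands that formula~(\ref{2bridgeformula}) attributes to $b_1$ and $b_{2k+1}$.

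The main obstacle is precisely this boundary bookkeeping, and I expect to treat the generic case $|b_1|>1,\ |b_{2k+1}|>1$ in full by splitting it into the four sign subcases (i)--(iv) determined by the signs of $b_1$ and $b_{2k+1}$. In each subcase one reads off the values and signs of $c_1,c_2,c_{2k+2},c_{2k+3}$, decides whether each lands in a summed position of formula~(\ref{2bridgeformula}), and checks that the surviving $\pm 1/2$ terms combine into the correct correction fraction; for instance in subcase~(i) one finds $\Delta(c_1)=\Delta(c_{2k+3})=1/2$ and $\Delta(c_2)=\Delta(c_{2k+2})=0$, matching $2\signc(b_1)/4+2\signc(b_{2k+1})/4$. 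The three degenerate cases ($|b_1|=1$ and/or $|b_{2k+1}|=1$) follow the same pattern, the only change being that a boundary entry is absorbed rather than created; these I would leave to the reader once the generic case establishes the mechanism.
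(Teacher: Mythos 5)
Your proposal is correct and follows essentially the same route as the paper: both reduce the claim to showing that formula~(\ref{2bridgeformula}) gives the same value on the signed vectors of $\bb(q,p)$ and $\bb(q,q-p)$ (the latter already being covered by Theorem~\ref{main_theorem_even}), derive the four-case description of the transformed signed vector, cancel the interior terms, and settle the boundary contributions $\Delta(c_1),\Delta(c_2),\Delta(c_{2k+2}),\Delta(c_{2k+3})$ by the same sign subcases, with the same values in subcase~(i). The paper likewise works out only the generic case $|b_1|>1$, $|b_{2k+1}|>1$ in detail and leaves the remaining cases to the reader.
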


Thus we have established the equivalence of the braid index formulations given by Theorem~\ref{2bridge_theorem} and Theorem~\ref{thm:braid} respectively, through completely elementary arguments.

We conclude this section with presenting an extended example of a
rational link with two components to illustrate the various ways to
compute the braid index of a rational link, depending on how the link is
represented. 

\begin{figure}[htb!]
\includegraphics[scale=1]{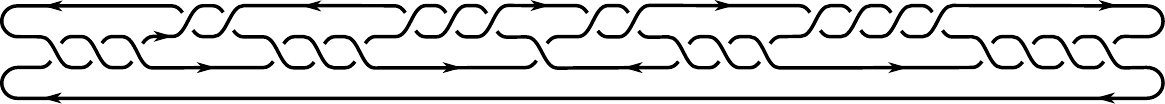}
\caption{The two bridge link $\bb(17426,5075)=(0,3,2,3,3,1,2,3,4,4)$ in an alternative standard form with signed vector $(3,2,3,3,-1,-2,-3,4,-4)$. Notice that it is not in a preferred standard form.}
\label{2bridgeone}
\end{figure}

\begin{example}{\em
Consider the rational link given in Figure~\ref{2bridgeone}, which is in an alternative standard form with signed vector $(3,2,3,3,-1,-2,-3,4,-4)$, but not in a preferred standard form. By Theorem~\ref{2bridge_theorem}, we have
\begin{eqnarray*}
\textbf{b}(K)&=&1+(2+\signc(b_1)+\signc(b_{2k+1}))/4+\sum_{b_{2j}>0,1\le j\le k}b_{2j}/2+\sum_{b_{2j+1}<0,0\le j\le k}|b_{2j+1}|/2\\
&=&1+1/2+(2+3+4)/2+(1+3+4)/2=10.
\end{eqnarray*}
Notice that we cannot apply Theorem~\ref{thm:braid} to $\bb(17426,5075)$ since it is not in a preferred standard form, but we can deform it to $\bb(17426,-12351)$ with vector $(-1,-2,-2,-3,-3,-1,-2,-3,-4,-3,-1)$ and $\signt(a_1)=+1$.
The diagram given by this is in preferred standard form $\tb(17426,12351)$ with $12351/17426=[0,1^+, 2^+, 2^+, 3^+, 3^+, 1^-, 2^-, 3^-, 4^+, 4^-]$. By Theorem~\ref{thm:braid} we have
\begin{eqnarray*}
\textbf{b}(K)&=&1+c(p/q,n)+\frac{1}{2}\left( \sum\limits_{j\geq 0, \signc(B_{2j+1})=\signc(B_1)} a_{2j+1}
  +\sum\limits_{j\geq 1, \signc(B_{2j})=-\signc(B_1)} a_{2j}\right)\\
&=&1+0+(1+2+3+4)/2+(1+3+4)/2=10.
\end{eqnarray*}
The mirror image $\overline{K}$ of $\bb(17426,5075)$ is also in an alternative standard form with signed vector $(-1,-2,-2,-3,-3,1,2,3,-4,3,1)$. By Theorem~\ref{2bridge_theorem} we have:
\begin{eqnarray*}
\textbf{b}(\overline{K})&=&1+(2+\signc(b_1)+\signc(b_{2k+1}))/4+\sum_{b_{2j}>0,1\le j\le k}b_{2j}/2+\sum_{b_{2j+1}<0,0\le j\le k}|b_{2j+1}|/2\\
&=&1+1/2+(1+3+3)/2+(1+2+3+4)/2=10.
\end{eqnarray*}
We also have $12351/17426=[0,-2, 2, -2, 6, -2, 2, -4, 4, 4, -2, 4]$ and the Cromwell-Murasugi index as in Definition \ref{def:CM} is given by $16-7+1=10$.
Finally, $12351/17426=[0,1, 2, 2, 3, 3, 1, 2, 3, 4, 4]$  has a primitive block decomposition with no nontrivial primitive block as follows:
 $[0,\mathbf{1},2,\mathbf{2},3,\mathbf{3};\mathbf{1},2, \mathbf{3};\mathbf{4};\mathbf{4}]$
By Theorem~\ref{thm:CMpblock}, the Cromwell-Murasugi index
of $12351/17426$ is  
$
\mathbf{b}(K)=(1+2+3+1+3+4+4)/2+1=10.
$

If we reverse the orientation of one of the components in 
$\bb(17426,5075)$ given in Figure~\ref{2bridgeone}, then we obtain a link diagram that is in a preferred standard form $K^\p=\tb(17426,5075)$ which is also in an alternative standard form, but with a different signed vector $(-3, -2, -3, 3, 1, 2, 3, 4, 4)$. The braid index of $K^\p$ can be computed both by the formula in Theorem~\ref{2bridge_theorem} and by the formula in Theorem~\ref{thm:braid}:
\begin{eqnarray*}
\textbf{b}(K^\p)&=&1+(2+\signc(b_1)+\signc(b_{2k+1}))/4+\sum_{b_{2j}>0,1\le j\le k}b_{2j}/2+\sum_{b_{2j+1}<0,0\le j\le k}|b_{2j+1}|/2\\
&=&1+1/2+(3+2+4)/2+(3+3)/2=9\\
&=&1+c(p/q,n)+\sum\limits_{j\geq 0, \signc(B_{2j+1})=\signc(B_1)} a_{2j+1}/2
  +\sum\limits_{j\geq 1, \signc(B_{2j})=-\signc(B_1)} a_{2j}/2.
\end{eqnarray*}
On the other hand, we have $5075/17426=[0,4, -2, 4, 4, -4, 2, -2, 6, -2, 2, -2]$ and the Cromwell-Murasugi index as in Definition~\ref{def:CM} gives $17-9+1=9$.
Notice that although $5075/17426 =[0,3, 2, 3, 3, 1, 2, 3, 4, 4]$  has no primitive block decomposition, $5075/17426=[0,3, 2, 3, 3, 1, 2, 3, 4, 3,1]$  does have a primitive block decomposition with no nontrivial primitive block: 
 $[0,\mathbf{3},2,\mathbf{3}; \mathbf{3},1,\mathbf{2},3,\mathbf{4},3,$ $\mathbf{1}].$
By Theorem~\ref{thm:CMpblock}, the Cromwell-Murasugi index
of $5075/17426$ is  
$
\mathbf{b}(K)=(3+3+3+2+4+1)/2+1=9.
$
}
\end{example}

\section{The HOMFLY Polynomial of alternating rational links}
\label{HOMFLY_section}

In this section we derive a formula for the HOMFLY
polynomial of an alternating link.
We will rely on a result of Lickorish and Millett~\cite[Proposition
  14]{Li-Mi} about the HOMFLY polynomial of a rational link whose
continued fraction is represented in the all-even form. Note that they
use the parameters $l$ and $\mu$ which are 
linked to the parameters $a$ and $z$ we use below by the formulas
$$
l=\mathbf{i}\cdot a \quad\mbox{and}\quad \mu=\frac{a^2-1}{az}.
$$

Using the matrices
\begin{equation}
\label{eq:M2r}  
\M(2r)=
\begin{pmatrix}
(1-(-1)^r l^{-2r})\mu^{-1} & (-1)^r l^{-2r}\\
1 & 0\\
\end{pmatrix}
=
\begin{pmatrix}
\frac{(1-a^{-2r})az}{a^2-1}  & a^{-2r}\\
1 & 0\\
\end{pmatrix}
\end{equation}
for all even integers, they state the following theorem.  

\begin{proposition}[Lickorish--Millett]
\label{prop:LM}
Let $K$ be a rational knot or link, represented by the continued
fraction $[0,c_1,\ldots,c_n]$ where the $c_i$ are even integers. Then
the HOMFLY polynomial ${\mathcal P}(K)$ is given by
\begin{equation}
\label{eq:LM}
{\mathcal P}(K)=
\begin{pmatrix}
1 & 0
\end{pmatrix}  
\M((-1)^n c_n)\M((-1)^{n-1} c_{n-1})\cdots \M(c_2)\M(-c_1)
\begin{pmatrix}
  1\\
  \frac{a^2-1}{az}\\
\end{pmatrix}
\end{equation}
\end{proposition}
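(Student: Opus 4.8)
The plan is to treat Proposition~\ref{prop:LM} for what the text says it is, a transcription of Lickorish and Millett's Proposition 14 into the matrix/continued-fraction language of this paper. Accordingly I would split the argument into a skein-theoretic core, a conventions-matching step, and a final change of variables. The conceptual setup is the rank-two HOMFLY skein module of an oriented tangle with four boundary points, spanned by the two crossingless tangles (the horizontal and the vertical smoothing). Building $[0,c_1,\ldots,c_n]$ from left to right by stacking the twist boxes $B_1,\ldots,B_n$, each box acts as a linear transfer operator on this two-dimensional module, so the polynomial of $K$ is obtained by applying these operators in order to an initial vector and then projecting. This is exactly why the answer has the shape of Equation~(\ref{eq:LM}): the left projection $\begin{pmatrix}1&0\end{pmatrix}$ reads off the numerator closure, which is $K$, and the initial vector $\begin{pmatrix}1\\ (a^2-1)/(az)\end{pmatrix}$ records the two closures of the starting configuration, its lower entry being the HOMFLY value $(a-a^{-1})/z=\mu$ of an extra split unknot component.

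Next I would pin down the transfer operator of a single box with $2r$ crossings. Using the oriented skein relation $aP_+-a^{-1}P_-=zP_0$, a single crossing satisfies a quadratic relation on the skein module, so the $2r$-fold twist is a linear combination of the identity and the smoothing whose coefficients form a geometric series in $a^{-2}$; summing that series (or an easy induction on $r$) produces precisely the matrix $\M(2r)$ of Equation~(\ref{eq:M2r}). It is here that the even length of each block is used: by the Duzhin--Shkolnikov observation quoted in the introduction the two strands through each box are counter-directed, which is what keeps the horizontal/vertical basis invariant under smoothing and makes the operator genuinely a $2\times2$ matrix rather than something larger.

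The delicate part, which I expect to be the main obstacle, is the sign and order bookkeeping built into the product $\M((-1)^n c_n)\M((-1)^{n-1}c_{n-1})\cdots\M(c_2)\M(-c_1)$. The factors appear in reverse order relative to the boxes, which is standard for operators acting on a column vector, so that $\M(-c_1)$ is applied first and corresponds to $B_1$; and the sign attached to $B_i$ is $(-1)^i$, matching, up to a global sign, the relation $\signt(a_i)=(-1)^{i-1}\sign(a_i)$ recorded after Figure~\ref{fig:rationallink}. These signs have two sources: adjacent boxes alternate between horizontal and vertical twisting, and the orientation convention of the preferred standard form reverses the effective twisting in every other block. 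I would verify box by box that reading the boxes of a preferred standard diagram against Lickorish and Millett's Conway-notation conventions reproduces exactly this signed, reversed product, so that their formula (stated in the variables $l$ and $\mu$) applies verbatim to our diagrams.

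Finally I would close the loop with the change of variables $l=\mathbf{i}\,a$ and $\mu=(a^2-1)/(az)$. The one identity to check is $l^{-2r}=(-1)^r a^{-2r}$, from which $(-1)^r l^{-2r}=a^{-2r}$ and $(1-(-1)^r l^{-2r})\mu^{-1}=\dfrac{(1-a^{-2r})az}{a^2-1}$; this is precisely the asserted equality of the two matrix expressions in Equation~(\ref{eq:M2r}), and it identifies the $(l,\mu)$ transfer matrix of Lickorish and Millett with the $(a,z)$ matrix $\M(2r)$ used here. Substituting this into their Proposition 14, together with the conventions match of the previous paragraph, yields Equation~(\ref{eq:LM}) and completes the proof.
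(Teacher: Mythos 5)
The paper does not actually prove Proposition~\ref{prop:LM}: it imports the statement from \cite[Proposition 14]{Li-Mi} and, in the remark that follows, justifies only the dictionary between the two sets of conventions --- the reversal of the index order, the identification of Lickorish and Millett's conjugate $\overline{M(2r)}$ with the substitution $2r\mapsto -2r$ (whence the factors $\M((-1)^i c_i)$), and the change of variables $l=\mathbf{i}\cdot a$, $\mu=(a^2-1)/(az)$. Your last two paragraphs carry out essentially this same dictionary, and your computation $(-1)^r l^{-2r}=a^{-2r}$ and $(1-(-1)^r l^{-2r})\mu^{-1}=(1-a^{-2r})az/(a^2-1)$ is exactly the verification that the two forms of $\M(2r)$ in \eqref{eq:M2r} agree; that part is correct and matches the paper. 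Where you genuinely diverge is in additionally sketching a from-scratch skein-theoretic derivation of the underlying transfer-matrix formula, which the paper never attempts; that route would buy a self-contained proof rather than a citation. As written, however, it is a plan rather than a proof: the derivation of $\M(2r)$ from the skein relation is asserted (``summing that series produces precisely the matrix'') rather than computed, and the step you yourself identify as the delicate one --- checking box by box that a preferred standard diagram, read against the Conway-notation conventions of \cite{Li-Mi}, yields the reversed product with the signs $(-1)^i$ --- is deferred with ``I would verify.'' If the proposition is to be treated as the paper treats it, namely as a quoted result plus a translation of conventions, your argument is adequate and takes the same approach as the paper's remark; if it is to be proved independently, those two verifications are the actual mathematical content and are missing.
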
  
\begin{remark}
{\em Lickorish and Millett~\cite{Li-Mi} represent rational knots and
  links by continued fractions of the form $[c_n,\ldots,c_1]$ instead of
  our $[0,c_1,\ldots,c_n]$. To account for this change we reversed the
  order of indices in their formula. They also introduce the conjugate
  $\overline{M(2r)}$ of $M(2r)$ obtained by interchanging
  $l=\mathbf{i}\cdot z$ and 
  $l^{-1}=-\mathbf{i} z^{-1}$. Note however, that interchanging  $l$ and
  $l^{-1}$ in~\eqref{eq:M2r} 
  takes $\M(2r)$ into $\M(-2r)$: the parity of $r$ is the same as that
  of $-r$, thus $(-1)^r=(-1)^{-r}$ whereas $l^{-2(-r)}=l^{2r}$, so
  replacing $2r$ with $-2r$ has the same effect as interchanging $l$ and
  $l^{-1}$.  Hence taking the conjugate of $\M(c_i)$
is exactly the same as replacing it with $\M(-c_i)$. The formula stated
in~\cite[Proposition 14]{Li-Mi} calls for taking the conjugate of every
second matrix factor in such a way that the rightmost matrix is
conjugated. The effect of replacing each $\M(c_i)$ with $\M((-1)^ic_i)$
is exactly the same.
}
\end{remark}  

Keeping Proposition~\ref{prop:pblock} in mind, we will be interested in
using Proposition~\ref{prop:LM} in situations where a contiguous
substring $c_i, 
c_{i+1}, \ldots,c_j $ of partial denominators is of the form
$2,-2,\ldots,(-1)^{j-i} 2$
or $-2,2,\ldots, ,(-1)^{j-i+1} 2$. Note that, due to the rule calling
for ``conjugating'' every second factor in~\eqref{eq:LM}, such
alternating strings of $2$s and $-2$s give rise to powers of the
matrices $\M(2)$ and $\M(-2)$ respectively. Substituting $r=1$ and
$r=-1$, respectively, into \eqref{eq:M2r} yields

\begin{equation}
\label{eq:M2}  
\M(2)=\begin{pmatrix}
za^{-1} & a^{-2}\\
1 & 0\\
\end{pmatrix}
\quad\mbox{and}\quad
\M(-2)=\begin{pmatrix}
-za & a^{2}\\
1 & 0\\
\end{pmatrix}.
\end{equation}

It is easy to describe the powers of such matrices in terms of {\em
  Fibonacci polynomials $F_n(x)$} defined by the initial conditions
\begin{equation}
F_0(x)=0\quad\mbox{and}\quad F_1(x)=1;
\end{equation}  
and the recurrence
\begin{equation}
\label{eq:fibrec}
  F_{n+1}(x)=x F_n(x)+F_{n-1}(x) \quad\mbox{for $n\geq 1$} 
\end{equation}  

It is well known that these polynomials are given by the closed form
formula
\begin{equation}
F_n(x)=\sum_{j=0}^{\lfloor (n-1)/2\rfloor} \binom{n-j-1}{j} x^{n-1-2j}.
\end{equation}

\begin{proposition}
\label{prop:matrixp}
  For all $n\geq 1$, we have
  $$
\begin{pmatrix}
u & v^2\\
1 & 0\\
\end{pmatrix}^n
=
\begin{pmatrix}
v^n \cdot F_{n+1}\left(u/v\right)& v^{n+1}\cdot F_{n}\left(u/v\right)\\
v^{n-1}\cdot F_{n}\left(u/v\right)& v^{n}\cdot F_{n-1}\left(u/v\right)\\
\end{pmatrix}
$$
\end{proposition}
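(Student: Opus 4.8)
The plan is to proceed by induction on $n$. Write $A = \begin{pmatrix} u & v^2 \\ 1 & 0 \end{pmatrix}$ and abbreviate $x = u/v$, so that the claimed identity reads $A^n = \begin{pmatrix} v^n F_{n+1}(x) & v^{n+1} F_n(x) \\ v^{n-1} F_n(x) & v^n F_{n-1}(x)\end{pmatrix}$. Setting $x=u/v$ at the outset is the key normalization: it is what lets the Fibonacci polynomials absorb the entries of $A$ once the appropriate powers of $v$ are pulled out.

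For the base case $n=1$ I would use the initial data $F_0(x)=0$ and $F_1(x)=1$, together with $F_2(x)=x$ (which follows from the recurrence (\ref{eq:fibrec}) as $F_2 = xF_1 + F_0$). Substituting these gives the four entries $v\,F_2(x)=u$, $v^2 F_1(x)=v^2$, $F_1(x)=1$, and $v\,F_0(x)=0$, which reproduces $A$ exactly.

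For the inductive step, assuming the formula for $n$, I would compute $A^{n+1}=A^n A$ by multiplying the inductive-hypothesis matrix on the right by $A$. The two right-hand entries are immediate, since the second column of $A$ has a single nonzero entry $v^2$, giving $v^{n+2}F_{n+1}(x)$ and $v^{n+1}F_n(x)$. The left-hand entries require the recurrence: the $(1,1)$ entry is $u\,v^n F_{n+1}(x)+v^{n+1}F_n(x)=v^{n+1}\bigl(x F_{n+1}(x)+F_n(x)\bigr)=v^{n+1}F_{n+2}(x)$, and the $(2,1)$ entry is $u\,v^{n-1}F_n(x)+v^n F_{n-1}(x)=v^n\bigl(x F_n(x)+F_{n-1}(x)\bigr)=v^n F_{n+1}(x)$, in each case invoking (\ref{eq:fibrec}) after factoring out the common power of $v$. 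The resulting matrix is exactly the claimed one with $n$ replaced by $n+1$.

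There is no genuine obstacle here; the only thing to get right is the bookkeeping of the powers of $v$, which is precisely what forces the argument $x=u/v$ and makes the combination $xF_k+F_{k-1}$ appear so that the recurrence applies. One could instead diagonalize $A$ over the splitting field of $t^2-ut-v^2$ and derive a Binet-type closed form, but the induction is shorter and avoids the degenerate case of a repeated eigenvalue.
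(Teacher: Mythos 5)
Your proof is correct and follows essentially the same route as the paper: induction on $n$, with the base case read off from $F_0=0$, $F_1=1$, $F_2(x)=x$, and the inductive step reduced to the recurrence $F_{k+1}=xF_k+F_{k-1}$ after factoring out powers of $v$. The only cosmetic difference is that you multiply the inductive-hypothesis matrix by $A$ on the right while the paper multiplies on the left; both computations are equally valid and yield the same bookkeeping.
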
  
\begin{proof}
We proceed by induction on $n$. For $n=1$ the statement is is a direct
consequence of the definitions. Assume the statement is true for some
$n\geq 1$. Then
\begin{align*}
\begin{pmatrix}
u & v^2\\
1 & 0\\
\end{pmatrix}^{n+1}
&=\begin{pmatrix}
u & v^2\\
1 & 0\\
\end{pmatrix}
\cdot
\begin{pmatrix}
u & v^2\\
1 & 0\\
\end{pmatrix}^{n}
=
\begin{pmatrix}
u & v^2\\
1 & 0\\
\end{pmatrix}
\cdot
\begin{pmatrix}
v^n \cdot F_{n+1}\left(u/v\right)& v^{n+1}\cdot F_{n}\left(u/v\right)\\
v^{n-1}\cdot F_{n}\left(u/v\right)& v^{n}\cdot F_{n-1}\left(u/v\right)\\
\end{pmatrix}\\
&=
\begin{pmatrix}
v^{n+1} (u/v\cdot F_{n+1}(u/v)+F_n(u/v)) & v^{n+2} (u/v\cdot
F_{n}(u/v)+F_{n-1}(u/v))\\
v^n \cdot F_{n+1}\left(u/v\right) & v^{n+1}\cdot F_{n}\left(u/v\right)\\
\end{pmatrix}.
\end{align*}
and the statement is now a direct consequence of the recurrence~(\ref{eq:fibrec}). 
\end{proof}  

Direct substitution of Proposition~\ref{prop:matrixp} into~(\ref{eq:M2})
yields

\begin{align}
\label{eq:M2p}  
\M(2)^n&=
\begin{pmatrix}
a^{-n} \cdot F_{n+1}\left(z\right)& a^{-(n+1)}\cdot F_{n}\left(z\right)\\
a^{-n+1}\cdot F_{n}\left(z\right)& a^{-n}\cdot F_{n-1}\left(z\right)\\
\end{pmatrix}\\
\M(-2)^n&=
\label{eq:M-2p}  
\begin{pmatrix}
a^n \cdot F_{n+1}\left(-z\right)& a^{n+1}\cdot F_{n}\left(-z\right)\\
a^{n-1}\cdot F_{n}\left(-z\right)& a^{n}\cdot F_{n-1}\left(-z\right)\\
\end{pmatrix}.
\end{align}

We extend the validity of \eqref{eq:M2p} and \eqref{eq:M-2p} to $n=0$ by
setting
\begin{equation}
\label{eq:Fneg}
F_{-1}(x)=1.
\end{equation}
Observe that \eqref{eq:Fneg} is consistent with applying the recurrence formula 
\eqref{eq:fibrec} to $n=0$, as we have $1=F_1(x)=x\cdot
F_{0}(x)+F_{-1}(x)$. Furthermore, extending \eqref{eq:M2p} and
\eqref{eq:M-2p} in such a way yields  
$$
\M(2)^0=\M(-2)^0=
\begin{pmatrix}
1& 0\\
0& 1\\
\end{pmatrix}
$$
as expected. 

We can use this to derive a different formula for the HOMFLY polynomial
${\mathcal P}(K)$ than the one given in Proposition~\ref{prop:LM}.
Let $K$ be a rational knot or link, represented by the continued
fraction $p/q =[0,c_1,\ldots,c_n]$ where $pq$ is an even integer and the vector $(c_1,\ldots,c_n)$ describes a standard diagram with all $c_i>0$. Let $(Bl_1;\ldots;Bl_n)$ be block decomposition of $(c_1,\ldots,c_n)$ and let $\sigma_i=\pm1$ be the sign of crossings in the block $Bl_i$. If $\M(Bl_i)$ is the $2\times 2$ matrix giving the contribution of the block $Bl_i$ to the HOMFLY polynomial ${\mathcal P}(K)$ then
the HOMFLY polynomial ${\mathcal P}(K)$ is given by
\begin{equation}
\label{eq:newmatrix}
{\mathcal P}(K)=
\begin{pmatrix}
1 & 0
\end{pmatrix}  
\M(Bl_1)\M(Bl_2)\cdots \M(Bl_k)
\begin{pmatrix}
  1\\
  \frac{a^2-1}{az}\\
\end{pmatrix},
\end{equation}
where for a block $Bl_i=(c_m,\ldots,c_{m+2j_i})$ we have the following product of $2j_i+1$ of $2\times 2$ matrices

\begin{eqnarray}
\label{eq:newblock}
\begin{aligned}
\M(Bl_i)=
\M\left(\sigma_i \frac{c_m+1}{2}\right) \M(\sigma_i 2)^{c_{m+1}-1}\M\left(\sigma_i \frac{c_{m+2}+2}{2}\right)  \M(\sigma_i 2)^{c_{m+3}-1} \\ 
\cdots \M(\sigma_i 2)^{c_{m+2j_i-1}-1}\M\left(\sigma_i \frac{c_{m+2j_i}+1}{2}\right).
\end{aligned}
\end{eqnarray}

The formula given in equation~(\ref{eq:newmatrix}) allows the computation of
the HOMFLY polynomial of an oriented link from its minimal alternating
form, provided it is in preferred standard form. 

\begin{example}{\em
We are given the two bridge knot $3244/4195$ with the primitive block decomposition $(0,1,3,2,2,3;5,3,3)$.
If we assume that $\signt(a_1)=-1$ then the crossings in the first block  $Bl_1=(1,3,2,2,3)$ are negative and the crossings in the second block $Bl_2= (5,3,3)$ are positive. Then we compute the following matrix product for the HOMFLY polynomial

\begin{eqnarray*}
{\mathcal P}(K)=
\begin{pmatrix}
1 & 0
\end{pmatrix} 
\M\left(- \frac{2}{2}\right) \M(- 2)^{3-1}\M\left(- \frac{4}{2}\right)
\M(- 2)^{2-1}\\ \M\left(- \frac{4}{2}\right) \M\left(\frac{6}{2}\right) \M(
2)^{3-1}\M\left(\frac{4}{2}\right)  
\begin{pmatrix}
  1\\
  \frac{a^2-1}{az}\\
\end{pmatrix}.
\end{eqnarray*}

}
\end{example}

Using Theorem~\ref{thm:blocktrans} we obtain the following result. 
\begin{theorem}
\label{thm:homfly}  
Suppose $\tilde{b}(q,p)$ is
represented by a nonalternating 
continued fraction $p/q=[0,a_1,\ldots,$ $a_n]$
that has a primitive block decomposition with no exceptional primitive
block. Then the HOMFLY polynomial may be written in matrix form
as follows:
 $$
{\mathcal P}(K)=
\begin{pmatrix}
1 & 0
\end{pmatrix}  
\H\left(a_n\right) \H\left(a_{n-1}\right)\cdots \H\left(a_1\right)
\begin{pmatrix}
  1\\
  \frac{a^2-1}{az}\\
\end{pmatrix}.
$$
Here, after introducing $s=\sign(a_1)$, the matrices $\H\left(a_1\right),\H\left(a_{2}\right),\ldots
\H\left(a_n\right)$ are given by the following formulas.  
\begin{enumerate}
\item 
  $$\H(a_1)
  =
  \begin{cases}
\M(-a_1)& \mbox{if $a_1$ is even;}\\
\M(-(a_1+s))& \mbox{if $a_1$ is odd.}\\
  \end{cases}  
  $$
\item If $\signc(a_i)\neq \signc(a_{i-1})$ then set
  $$\H(a_i)
  =
  \begin{cases}
\M(-\signc(a_i) a_i)& \mbox{if $a_i$ is even;}\\
\M(-\signc(a_i) (a_i+s))& \mbox{if $a_i$ is odd.}\\
  \end{cases}  
  $$
\item If $\signc(a_i)=\signc(a_{i-1})=(-1)^{i-1}\cdot s$
then set
  $$\H(a_i)
  =
  \begin{cases}
\M(-\signc(a_i) (a_i+2s))& \mbox{if $a_i$ is even;}\\
\M(-\signc(a_i) (a_i+s))& \mbox{if $a_i$ is odd.}\\
  \end{cases}  
  $$
\item If $\signc(a_i)=\signc(a_{i-1})=(-1)^{i}\cdot s$ then  
  set
  $$
  \H(a_i)=
  \begin{pmatrix}
a^{\signc(a_i) \cdot (a_i-s)} \cdot F_{|a_i|+1}\left(-\signc(a_i)\cdot z\right)&
a^{\signc(a_i) \cdot a_i}\cdot F_{|a_i|}\left(-\signc(a_i)\cdot z\right)\\
a^{\signc(a_i) \cdot (a_i-2s)}\cdot F_{|a_i|}\left(-\signc(a_i)\cdot z\right)& a^{\signc(a_i) \cdot (a_i-s)}\cdot F_{|a_i|-1}\left(-\signc(a_i)\cdot z\right)\\
\end{pmatrix}.
  $$
\end{enumerate}
\end{theorem}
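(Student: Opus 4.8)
The plan is to combine Theorem~\ref{thm:blocktrans} with the Lickorish--Millett formula of Proposition~\ref{prop:LM}. Since the primitive block decomposition contains no exceptional block, $pq$ is even by Theorem~\ref{thm:noepb}, so $p/q$ has an even denominator form $[0,c_1,\ldots,c_N]$, and Theorem~\ref{thm:blocktrans} describes this form explicitly as the string obtained by replacing each $a_i$ according to its four cases. Substituting these even denominators into~\eqref{eq:LM} expresses $\mathcal{P}(K)$ as a product $\M((-1)^N c_N)\cdots\M(-c_1)$ of $2\times 2$ matrices, bordered by the same row vector $(1\ 0)$ and column vector appearing in~\eqref{eq:LM} and read from the last denominator to the first. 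I would then partition the index set $\{1,\ldots,N\}$ into consecutive runs, one run per original entry $a_i$, and \emph{define} $\H(a_i)$ to be the product of the matrices $\M((-1)^j c_j)$ whose indices $j$ lie in the run coming from $a_i$. With this grouping the identity $\mathcal{P}(K)=(1\ 0)\,\H(a_n)\cdots\H(a_1)\,(1,\tfrac{a^2-1}{az})$ holds automatically, and the whole content of the theorem is the evaluation of each $\H(a_i)$.

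The evaluation splits exactly along the four cases of Theorem~\ref{thm:blocktrans}. In cases (1)--(3) the entry $a_i$ is transformed into a single even partial denominator, so its run has length one and $\H(a_i)$ is a single matrix $\M((-1)^j c_j)$; here the only work is to check that the parity $(-1)^j$ supplied by Lickorish--Millett, combined with the transformation sign $(-1)^{\tau(i)}$ and the crossing sign $\signc(a_i)$, produces precisely the argument $-\signc(a_i)a_i$, $-\signc(a_i)(a_i+s)$, or $-\signc(a_i)(a_i+2s)$ recorded in the statement. In case (4) the entry $a_i$ is expanded into a run of $\pm 2$'s whose signs alternate, so after the alternating ``conjugation'' $(-1)^j$ the arguments become constant and the run collapses to a single power of $\M(2)$ or $\M(-2)$; evaluating this power through Proposition~\ref{prop:matrixp}, equivalently~\eqref{eq:M2p} and~\eqref{eq:M-2p}, is what produces the Fibonacci-polynomial matrix displayed in the statement.

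The main obstacle is the sign and index bookkeeping, and it is concentrated in two places. First, the parity argument $(-1)^j$ in~\eqref{eq:LM} is keyed to the absolute position $j$ in the long even-denominator string, whereas the statement is phrased through $\signc(a_i)$, $s=\sign(a_1)$, and $\tau(i)$; reconciling the two requires tracking how the run lengths accumulate and, crucially, how the sign alternation is broken at the boundary between two primitive blocks, where the first transformed denominator of the new block keeps the sign of the last transformed denominator of the old one (exactly the exception noted at the end of the proof of Theorem~\ref{thm:blocktrans}). Second, in case (4) one must pin down which constant-sign power of $\M(\pm 2)$ the run actually yields and then match, entry by entry, the $a$-exponents and the Fibonacci indices produced by~\eqref{eq:M2p}--\eqref{eq:M-2p} against the displayed matrix; this matching is delicate precisely because the index shifts by small amounts, and one uses the extension $F_{-1}(x)=1$ from~\eqref{eq:Fneg} to handle the degenerate run of length zero coming from $|a_i|=1$. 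Once these two sign-and-index computations are carried out, the four cases follow by direct substitution and the theorem is proved.
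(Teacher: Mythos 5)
Your proposal follows essentially the same route as the paper: convert to all-even form via Theorem~\ref{thm:blocktrans}, feed the result into the Lickorish--Millett formula of Proposition~\ref{prop:LM}, group the resulting matrix factors by the original entries $a_i$, and collapse the runs of alternating $\pm 2$'s into powers of $\M(\pm 2)$ evaluated by Proposition~\ref{prop:matrixp}. The paper itself states the result with only the remark that the signs of the $\M(2r)$ arguments are constant within each primitive block and opposite to the crossing signs, leaving the verification to the reader, so your identification of the two bookkeeping obstacles (the interaction of $(-1)^j$, $(-1)^{\tau(i)}$, and $\signc$ at block boundaries, and the Fibonacci-index matching including $F_{-1}(x)=1$) is, if anything, more explicit than the published argument.
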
  
The application of Theorem~\ref{thm:blocktrans} may be facilitated by
the following observation: if we start with a nonalternating continued
fraction $[0,a_1,\ldots,a_n]$ in which all signs alternate, the signs
alternate in the corresponding continued fraction $[0,c_1,\ldots,c_m]$
in all-even form, {\em except} at the beginning of a new primitive
block, where the crossing sign changes. Since the Lickorish-Millet
formula \eqref{eq:LM} calls for associating $\M((-1)^i c_i)$ to $c_i$,
the actual signs of the parameters $2r$ inside $\M(2r)$ that we will be
using will be constant within each primitive block and the opposites of the
crossing signs. The details of the verification are left to the reader. 

\begin{remark}
{\em Theorem~\ref{thm:homfly} is directly applicable only when the link has a
diagram $\bb(q,p)$ in preferred standard form. If this is not the case,
and the link diagram does not have a preferred standard form, we may
apply the result to the mirror image $\bb(q,q-p)$. It is well-known
(see, for example, \cite[Theorem 10.2.3]{Cromwell}) that the HOMFLY
polynomial of an oriented link may be obtained by substituting $a^{-1}$
into $a$ in the HOMFLY polynomial of its mirror image.}   
 \end{remark}

\comment{
\begin{figure}[h]
\begin{center}
\input{automaton2.pspdftex}
\end{center}
\caption{Automaton, helping to identify knots}
\label{fig:automaton2}
\end{figure}}

\section*{Acknowledgments}
This work was partially supported by a grant from the Simons Foundation
(\#514648 to G\'abor Hetyei).

\end{document}